\theoremstyle{plain}
\newtheorem{thm}{Theorem}[section]
\newtheorem{cor}[thm]{Corollary}
\newtheorem{lem}[thm]{Lemma}
\def\@rst #1 #2other{#1}
\newcommand\MR[1]{\relax\ifhmode\unskip\spacefactor3000 \space\fi
  \MRhref{\expandafter\@rst #1 other}{#1}}
\newcommand{\MRhref}[2]{\href{http://www.ams.org/mathscinet-getitem?mr=#1}{MR#2}}
\theoremstyle{definition}
\newtheorem{remark}[thm]{Remark}
\numberwithin{equation}{section}
\newcommand{\dsb}{\begin{adjustwidth}{2.5em}{0pt}
\begin{footnotesize}}
\newcommand{\dse}{\end{footnotesize}
\end{adjustwidth}}
\newcommand{\ssb}{\begin{adjustwidth}{2.5em}{0pt}}
\newcommand{\sse}{\end{adjustwidth}}
\newcommand{\aryb}{\begin{eqnarray*}}
\newcommand{\arye}{\end{eqnarray*}}
\def\alb#1\ale{\begin{align*}#1\end{align*}}
\def\allb#1\alle{\begin{align}#1\end{align}}
\newcommand{\eqb}{\begin{equation}}
\newcommand{\eqe}{\end{equation}}
\newcommand{\eqbn}{\begin{equation*}}
\newcommand{\eqen}{\end{equation*}}
\newcommand{\BB}{\mathbbm}
\newcommand{\ol}{\overline}
\newcommand{\ul}{\underline}
\newcommand{\op}{\operatorname}
\newcommand{\ep}{\varepsilon}
\newcommand{\rta}{\rightarrow}
\newcommand{\wt}{\widetilde}
\newcommand{\mcl}{\mathcal}
\newcommand{\bdy}{\partial}
\newcommand{\rng}{\mathring}
\newcommand{\cc}{\mathbf{c}}
\let\originalleft\left
\let\originalright\right
\renewcommand{\left}{\mathopen{}\mathclose\bgroup\originalleft}
\renewcommand{\right}{\aftergroup\egroup\originalright}
\title{Bounds for distances and geodesic dimension in Liouville first passage percolation}
\date{ }
\author{
\begin{tabular}{c} Ewain Gwynne\\[-5pt]\small Cambridge \end{tabular}  
\begin{tabular}{c} Joshua Pfeffer\\[-5pt]\small MIT \end{tabular} 
}
\begin{document}

\maketitle

\begin{abstract}
For $\xi \geq 0$, Liouville first passage percolation (LFPP) is the random metric on $\varepsilon \mathbb Z^2$ obtained by weighting each vertex by $\varepsilon  e^{\xi h_\varepsilon(z)}$, where $h_\varepsilon(z)$ is the average of the whole-plane Gaussian free field $h$ over the circle $\partial B_\varepsilon(z)$. 
Ding and Gwynne (2018) showed that for $\gamma \in (0,2)$, LFPP with parameter $\xi = \gamma/d_\gamma$ is related to $\gamma$-Liouville quantum gravity (LQG), where $d_\gamma$ is the $\gamma$-LQG dimension exponent. 
For $\xi  > 2/d_2$, LFPP is instead expected to be related to LQG with central charge greater than 1. 

We prove several estimates for LFPP distances for general $\xi\geq 0$. For $\xi\leq 2/d_2$, this leads to new bounds for $d_\gamma$ which improve on the best previously known upper (resp.\ lower) bounds for $d_\gamma$ in the case when $\gamma  > \sqrt{8/3}$ (resp.\ $\gamma \in (0.4981, \sqrt{8/3})$). 
These bounds are consistent with the Watabiki (1993) prediction for $d_\gamma$. 
However, for $\xi > 1/\sqrt 3$ (or equivalently for LQG with central charge larger than 17) 
our bounds are inconsistent with the analytic continuation of Watabiki's prediction to the $\xi  >2/d_2$ regime. 
We also obtain an upper bound for the Euclidean dimension of LFPP geodesics. 
\end{abstract}


\section{Introduction}
\label{sec-intro}

Let $\gamma \in (0,2]$, let $U\subset\BB C$ be an open set, and let $h$ be some variant of the Gaussian free field (GFF) on $U$. The \emph{$\gamma$-Liouville quantum gravity (LQG)} surface associated with $(U,h)$ is, heuristically speaking, the random two-dimensional Riemannian manifold parametrized by $U$ with Riemannian metric tensor $e^{\gamma h} \, (dx^2 + dy^2)$, where $dx^2 + dy^2$ is the Euclidean metric tensor. LQG surfaces arise as the scaling limits of various discrete random geometries, such as random planar maps and Liouville first passage percolation, which we discuss just below.

The above definition of an LQG surface does not make literal sense since $h$ is a distribution, not a function. 
Nevertheless, it is possible to make sense of the volume form associated with an LQG surface as a limit of regularized versions of $e^{\gamma h} \,dz$, where $dz$ denotes Lebesgue measure; see~\cite{kahane,shef-kpz,rhodes-vargas-review}. 
One can also make sense of LQG as a random metric space. This was first done in the special case when $\gamma=\sqrt{8/3}$ by Miller and Sheffield~\cite{lqg-tbm1,lqg-tbm2,lqg-tbm3}, in which case the resulting metric space is isometric to the so-called \emph{Brownian map}~\cite{legall-uniqueness,miermont-brownian-map}.
Very recently, Gwynne and Miller~\cite{gm-uniqueness} constructed the $\gamma$-LQG metric for all $\gamma\in(0,2)$, building on the works~\cite{dddf-lfpp,lqg-metric-estimates,local-metrics,gm-confluence}. 

Several important properties of the $\gamma$-LQG are not yet fully understood. 
For example, for $\gamma \not=  \sqrt{8/3} $, the Hausdorff dimension of the $\gamma$-LQG metric is unknown (the dimension is 4 for $\gamma=\sqrt{8/3}$).
However, recent progress on related problems has been made in~\cite{ding-goswami-watabiki,ding-dunlap-lqg-fpp,ding-zhang-geodesic-dim,dzz-heat-kernel,ghs-dist-exponent,ghs-map-dist,dg-lqg-dim,df-lqg-metric,ding-dunlap-lgd}. 
Particularly relevant to us are the articles~\cite{ghs-map-dist,dzz-heat-kernel,dg-lqg-dim} which establish for each $\gamma \in (0,2)$ the existence of an exponent $d_\gamma > 2$ which arises in a variety of different approximations of LQG distances and which is equal to the Hausdorff dimension of the LQG metric~\cite{gp-kpz}.
See~\eqref{eqn-lambda-gamma} below for the appearance of $d_\gamma$ in our paper.
We define $d_2 := \lim_{\gamma\rta 2^-} d_\gamma$, which exists since $\gamma\mapsto d_\gamma$ is increasing~\cite{dg-lqg-dim}. 
It is known that $d_{\sqrt{8/3}} = 4$~\cite[Theorem 1.2]{dg-lqg-dim}, but for other $\gamma$ the value of $d_\gamma$ is unknown. The best-known physics prediction, due to Watabiki~\cite{watabiki-lqg}, is 
\eqb \label{eqn-watabiki} 
d_\gamma^{\op{Wat}} = 1 + \frac{\gamma^2}{4} + \frac14\sqrt{(4+\gamma^2)^2 + 16\gamma^2} .
\eqe
But, it was proven by Ding and Goswami~\cite{ding-goswami-watabiki} that this prediction is false for small values of $\gamma$.

One of the most natural ways to study $\gamma$-LQG distances is to consider the random metric obtained by exponentiating a continuous approximation of the GFF  (as is done in several of the above-cited works). 
Such approximate metrics are referred to as \emph{Liouville first passage percolation (LFPP)}. 
In this article, we will prove several estimates for LFPP distances which in particular lead to new bounds for $d_\gamma$ for general $\gamma \in (0,2]$, improving on the previous best known upper (resp.\ lower) bound from~\cite{dg-lqg-dim} in the case when $\gamma > \sqrt{8/3}$ (resp.\ $\gamma \in (0.4981, \sqrt{8/3})$) (Corollary~\ref{cor-d-bound}). We also establish an upper bound for the Euclidean dimension of LFPP geodesics.

Our bounds are valid not only for discretizations of $\gamma$-LQG with $\gamma \in (0,2]$ but also for discreteizations of a certain extension of LQG beyond this phase: LQG with central charge in $(1,25)$, which corresponds to $\xi > 2/d_2$ in the model which we define just below. 
In this extended regime, our bounds are inconsistent with the analytic continuation of Watabiki's prediction for a range of parameter values; see Corollary~\ref{cor-watabiki-contradict}.
\medskip

\noindent\textbf{Acknowledgments.} We thank an anonymous referee for helpful comments on an earlier version of this article. We thank Scott Sheffield for helpful discussions. Part of the project was carried out during J.P.'s visit to the Isaac Newton Institute in Cambridge, UK in Summer 2018. We thank the institute for its hospitality.  J.P.\ was partially supported by the National Science Foundation Graduate Research Fellowship under Grant No. 1122374.
 
\subsection*{Definition of the model}

Let $h$ be a whole-plane GFF, normalized so that its circle average over $\bdy\BB D$ is zero. For $\ep > 0$ and $z\in\BB C$, we write $h_\ep(z)$ for the average of $h$ over the circle of radius $\ep$ centered at $z$ (see~\cite[Section 3.1]{shef-kpz} for more on the circle average process). 
We write $\BB S  := [0,1]^2$. 
For $\ep > 0$, we define $\BB S^\ep := (\ep\BB Z^2) \cap \BB S$ and we equip $\BB S^\ep$ with its standard nearest-neighbor graph structure. 

For $\ep , \xi \geq 0$ and a lattice path $P :\{0,1,\ldots,N\}   \rta  \BB S^\ep$ for some $N \in\BB N$, we define the \emph{$\ep$-LFPP length} of $P$, with parameter $\xi$, by
\eqb
L_h^{\xi,\ep}(P) := \sum_{j=0}^N \ep e^{\xi h_\ep(P(j))} .
\eqe
The reason for the factor of $\ep$ is that edges of $\BB Z^2$ have side length $\ep$, so this factor makes it so that $L_h^{\xi,\ep}$ approximates the integral of $e^{\xi h_\ep}$ along a linearly interpolated version of $P$.  
For $z,w\in \BB S^\ep$, we define the \emph{$\xi$-LFPP distance} by\footnote{\label{footnote-lfpp-variants} There are several other natural variants of LFPP besides the one we consider here. 
For example, we can replace the circle average process by the white noise approximation or by the discrete GFF. 
We can also define distances by integrating along continuous paths rather than by summing along paths in $\ep\BB Z^2$. 
The arguments of this paper work for any of these approximations; all we need is that the variance of the approximating field is of order $\log\ep^{-1}  + O_\ep(1)$, uniformly over $\BB S$. 
It follows from~\cite[Lemma 3.1 and Proposition 3.16]{dg-lqg-dim} that with probability tending to 1 as $\ep\rta 0$, LFPP distances are scaled by a factor of at most $\ep^{o_\ep(1)}$ if we replace the circle average process by the white noise decomposition and/or we integrate along continuous paths instead of discrete paths.
\cite[Theorem 1.4]{ang-discrete-lfpp} gives a similar statement comparing LFPP defined using the discrete GFF instead of the circle average process of the continuum GFF.
In particular, the exponents for distances in the above variants of LFPP are all the same. 
We will use this fact without comment when we cite results from~\cite{dg-lqg-dim}. }
\eqb \label{eqn-lfpp-def} 
D_h^{\xi,\ep}(z,w  ) := \inf_{P : z\rta w}  L_h^{\xi,\ep}(P)  ,
\eqe
where the infimum is over all lattice paths in $\BB S^\ep$ from $z$ to $w$.   
Let $\bdy_{\op{L}} \BB S^\ep$ (resp.\ $\bdy_{\op{R}} \BB S^\ep$) be the left (resp.\ right) boundary of $\BB S^\ep$, i.e., the set of vertices of $\BB S^\ep$ whose nearest neighbor to the left (resp.\ right) in $\ep\BB Z^2$ is not in $\BB S^\ep$.  
For $\xi \geq 0$, we define the \emph{$\xi$-LFPP distance exponent}
\eqb \label{eqn-lambda-def}
\lambda(\xi) := \sup\left\{ \alpha \in\BB R  : \liminf_{\ep\rta 0} \BB P\left[ D_h^{\xi,\ep}\left( \bdy_{\op{L}} \BB S^\ep , \bdy_{\op{R}} \BB S^\ep \right) \leq \ep^{ \alpha} \right] = 1 \right\} .
\eqe
We have the following \emph{a priori} bounds for $\lambda(\xi)$.

\begin{lem} \label{lem-lambda-basic}
$\lambda(\xi) \in [-1/2 , 1]$ for all $\xi \geq 0$. Furthermore, $\xi\mapsto \lambda(\xi)$ is 2-Lipschitz on $[0,\infty)$. 
\end{lem}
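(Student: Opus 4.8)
The plan is to treat the three assertions — $\lambda(\xi)\le 1$, $\lambda(\xi)\ge -1/2$, and $2$-Lipschitzness — in order of increasing difficulty, beginning with the Lipschitz bound, which also packages the uniform-in-$\xi$ control needed for the rest. First I would record the Gaussian estimate that $\max_{z\in\BB S^\ep}|h_\ep(z)|\le (2+\delta)\log\ep^{-1}$ with probability tending to $1$ for every $\delta>0$: each $h_\ep(z)$ is centred Gaussian with variance $\log\ep^{-1}+O(1)$ uniformly over $\BB S$, so it exceeds this level with probability $\ep^{(2+\delta)^2/2+o_\ep(1)}$, and there are only $O(\ep^{-2})$ lattice points in $\BB S^\ep$. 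On the complementary event, for $0\le\xi\le\xi'$ and any lattice path $P$ one has $e^{\xi h_\ep(P(j))}\le e^{\xi' h_\ep(P(j))}\,\ep^{-(2+\delta)(\xi'-\xi)}$, and the reverse inequality with $\xi,\xi'$ swapped, so $D_h^{\xi,\ep}(\bdy_{\op{L}}\BB S^\ep,\bdy_{\op{R}}\BB S^\ep)$ and $D_h^{\xi',\ep}(\bdy_{\op{L}}\BB S^\ep,\bdy_{\op{R}}\BB S^\ep)$ differ by a multiplicative factor of at most $\ep^{-(2+\delta)(\xi'-\xi)}$. Feeding this into the definition \eqref{eqn-lambda-def} of $\lambda$ and sending $\delta\to 0$ gives $|\lambda(\xi)-\lambda(\xi')|\le 2|\xi-\xi'|$, the constant $2$ being exactly the exponent governing the maximum of the circle-average process over $\BB S$.

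For the window $[-1/2,1]$ I would first dispatch a neighbourhood of $\xi=0$ with two elementary estimates. For the lower bound, the straight segment $P_0(j)=(j\ep,1/2)$ is a left–right crossing with $\BB E[L_h^{\xi,\ep}(P_0)]=\ep^{-1}\cdot\ep\cdot e^{\xi^2(\log\ep^{-1}+O(1))/2}=\ep^{-\xi^2/2+o_\ep(1)}$, so Markov's inequality gives $D_h^{\xi,\ep}(\bdy_{\op{L}}\BB S^\ep,\bdy_{\op{R}}\BB S^\ep)\le\ep^{-\xi^2/2-o_\ep(1)}$ with probability tending to $1$, hence $\lambda(\xi)\ge-\xi^2/2$, which is $\ge-1/2$ for $\xi\le 1$. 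For the upper bound, every left–right crossing of $\BB S^\ep$ has at least $\ep^{-1}$ vertices, so any crossing $P$ satisfies $L_h^{\xi,\ep}(P)=\sum_j\ep\,e^{\xi h_\ep(P(j))}\ge\ep(N+1)\,e^{\xi\min_{z\in\BB S^\ep}h_\ep(z)}\ge e^{\xi\min_z h_\ep(z)}$; since $\min_z h_\ep(z)\ge-(2+\delta)\log\ep^{-1}$ with probability tending to $1$ this gives $\lambda(\xi)\le 2\xi$, which is $\le 1$ for $\xi\le 1/2$.

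The content of the lemma is that these bounds persist for all $\xi$, and this is where I expect the real work — and the main obstacle — to lie. The key object is a percolation-type statement for the circle-average field: for every fixed $c>0$, with probability tending to $1$ the low level set $\{z\in\BB S^\ep:h_\ep(z)<-c\log\ep^{-1}\}$ contains no left–right crossing of $\BB S^\ep$ (equivalently, via $h\mapsto-h$, the high superlevel set contains no top–bottom crossing). Granting this, the upper bound follows: if $D_h^{\xi,\ep}(\bdy_{\op{L}}\BB S^\ep,\bdy_{\op{R}}\BB S^\ep)\le\ep^\alpha$ with $\alpha>1$, a minimizing crossing $P$ has $\sum_j e^{\xi h_\ep(P(j))}\le\ep^{\alpha-1}$; fixing $c\in(0,(\alpha-1)/\xi)$, at most $\ep^{\alpha-1-\xi c}<1$ of its vertices can satisfy $h_\ep(P(j))\ge-c\log\ep^{-1}$, so for $\ep$ small $P$ lies entirely in the low level set, contradicting the percolation statement with probability tending to $1$; hence $\BB P[D_h^{\xi,\ep}\le\ep^\alpha]\not\to1$ and $\lambda(\xi)\le 1$. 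For the lower bound I would use the dual statement, together with the fact that at the relevant scales the regions one must avoid are sparse and mesoscopic, to build — by the same coarse-graining — a crossing whose number of lattice steps and whose circle-average profile are both controlled, the exponent $1/2$ emerging from the trade-off between the length of such a crossing and the field level it is forced to traverse. Proving the percolation statement is the crux: I would attempt a first-moment bound over coarse-grained crossings at a mesoscopic scale $\ep^{1-s}$ ($s>0$ small, depending on $c$), decomposing $h_\ep$ into its circle average at scale $\ep^{1-s}$ plus an independent fine field, with the delicate step being to bound the probability that all coarse squares along a fixed coarse path are "low" despite the log-correlations of the mesoscopic averages — handled via a Gaussian comparison/conditioning argument, or by importing the relevant multiscale estimates for LFPP from \cite{dddf-lfpp,dg-lqg-dim}.
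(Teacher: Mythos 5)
Your Lipschitz argument and your two ``small-$\xi$'' bounds are correct. The Lipschitz proof is exactly the paper's, and the elementary bounds $\lambda(\xi)\geq -\xi^2/2$ (via a first moment on a straight crossing) and $\lambda(\xi)\leq 2\xi$ (via $\#P\gtrsim\ep^{-1}$ and the lower bound on $\min h_\ep$) are sound; in fact the lower bound holds for all $\xi$, though as you note it only yields $\lambda\geq -1/2$ for $\xi\leq 1$, and the upper bound only yields $\lambda\leq 1$ for $\xi\leq 1/2$.

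The gap is precisely where you say the gap is: the ``percolation-type statement'' and its dual are the entire content of the lemma for general $\xi$, and you do not prove either one. The upper-bound reduction (if the sublevel set $\{h_\ep<-c\log\ep^{-1}\}$ contains no left--right crossing, then $D_h^{\xi,\ep}\geq\ep^{1+\xi c+o(1)}$) is fine as a conditional argument, but the no-crossing claim is a genuinely nontrivial statement about log-correlated fields, and ``a first-moment bound over coarse-grained crossings, with the delicate step handled via a Gaussian comparison argument, or by importing multiscale estimates'' is not a proof. The lower-bound half is worse off: you need to \emph{construct} a left--right crossing whose vertex count is $\lesssim\ep^{-3/2-o(1)}$ and along which $|h_\ep|\lesssim\zeta\log\ep^{-1}$, and you only gesture at ``the exponent $1/2$ emerging from the trade-off'' without deriving it. So the proposal reduces the lemma to two unproven multiscale facts and leaves them open.

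The paper sidesteps all of this by invoking \cite[Lemma~5.3]{ghpr-central-charge}, which asserts exactly the object you would need to build: with high probability there is a left--right crossing $P$ of $\BB S^\ep$ with $\#P\leq\ep^{-3/2-\zeta}$ and $|h_\ep(P(j))|\leq\zeta\log\ep^{-1}$ for every $j$. This single input gives $\lambda(\xi)\geq -1/2$ immediately (bound $L_h^{\xi,\ep}(P)\leq\ep^{-1/2-(\xi+1)\zeta}$), and it gives $\lambda(\xi)\leq 1$ by a duality trick that is slicker than your percolation route: any top--bottom path must intersect $P$, hence contains a vertex $v$ with $|h_\ep(v)|\leq\zeta\log\ep^{-1}$, so its $L_h^{\xi,\ep}$-length is at least $\ep^{1+\xi\zeta}$; $\pi/2$-rotation then transfers this to left--right distances. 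In other words, the paper does not need a ``no low crossing'' statement at all --- the existence of one good crossing forces every transversal crossing to be expensive. If you want to repair your argument without citing \cite{ghpr-central-charge}, the cleanest path is to prove their Lemma~5.3 (or an analogue), which subsumes both halves of your sketch; attempting to prove the percolation statement and the nice-path construction separately is strictly more work.
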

\begin{proof}
Let $\zeta >0$ be a small exponent. By~\cite[Lemma 5.3]{ghpr-central-charge}, it holds with probability tending to 1 as $\ep\rta 0$ that there is a path $P$ from $\bdy_{\op{L}} \BB S^\ep$ to $\bdy_{\op{R}} \BB S^\ep$ whose total number of vertices satisfies $\# P \leq \ep^{-3/2-\zeta}$ such that $|h_\ep(P(j))| \leq \zeta \log \ep^{-1} $ for each $j=0,\dots,\# P-1$. For this path, one has $L_h^{\xi,\ep}(P) \leq \ep^{-1/2- (\xi+1)\zeta}$, so $\lambda(\xi) \geq -1/2- (\xi+1) \zeta$. 
Furthermore, any path between the upper and lower boundaries of $\BB S^\ep$ must cross $P$, so must have $L_h^{\xi,\ep}$-length at least $\ep^{1+\xi\zeta}$. By $\pi/2$-rotational symmetric this implies $\lambda(\xi) \leq 1+\xi \zeta$. Since $\zeta>0$ is arbitrary we get $\lambda(\xi) \in [-1/2,1]$.

For the Lipschitz continuity, we observe that $\sup_{z\in\BB S} |h_\ep(z)| \leq (2+\zeta) \log\ep^{-1}$ with probability tending to 1 as $\ep\rta 0$ (see, e.g.,~\cite[Proposition 2.4]{lqg-tbm2}). On the event that this bound holds, for any path $P$ and any $0\leq \wt\xi\leq \xi$, we have $\ep^{ (2+\zeta)(\xi-\wt\xi)} \leq L_h^{\xi,\ep}(P) / L_h^{\wt\xi,\ep}(P) \leq \ep^{-(2+\zeta)(\xi-\wt\xi)}$. Therefore $|\lambda(\xi) - \lambda(\wt\xi)| \leq (2+\zeta)(\xi-\wt\xi)$. Again since $\zeta > 0$ is arbitrary, we get the 2-Lipschitz continuity. 
\end{proof}

It is shown in~\cite[Theorem 1.5]{dg-lqg-dim} (c.f.\ Footnote~\ref{footnote-lfpp-variants}) that, with $d_\gamma$ the dimension exponent for $\gamma$-LQG, it holds with probability tending to 1 as $\ep\rta 0$ that $D_h^{\gamma/d_\gamma , \ep}(\bdy_{\op{L}} \BB S^\ep , \bdy_{\op{R}}\BB S^\ep) = \ep^{1 - \gamma Q/d_\gamma + o_\ep(1)}$, where $Q = 2/\gamma + \gamma/2$. Moreover, the same is true for several other quantities related to LFPP distances, such as diameters and point-to-point distances.
In particular, 
\eqb \label{eqn-lambda-gamma}
\lambda(\gamma/d_\gamma) = 1 - \frac{\gamma}{d_\gamma} Q  = 1 - \frac{\gamma}{d_\gamma} \left(\frac{2}{\gamma} + \frac{\gamma}{2}\right) ,\quad\forall \gamma \in (0,2] .
\eqe
Note that~\eqref{eqn-lambda-gamma} for $\gamma=2$ follows from the case $\gamma <2$ and the continuity of $\lambda$. Since $d_{\sqrt{8/3}} = 4$,~\eqref{eqn-lambda-gamma} implies that $\lambda(1/\sqrt 6) =1/6 $.

As we will explain in Section~\ref{sec-cc}, we expect that LFPP with $\xi > 2/d_2$ is connected to Liouville quantum gravity with central charge in $(1,25]$ (note that $\gamma$-LQG for $\gamma\in (0,2]$ corresponds to central charge in $(-\infty,1]$).
In this regime, we do not know that $ D_h^{\xi,\ep}\left( \bdy_{\op{L}} \BB S^\ep , \bdy_{\op{R}} \BB S^\ep \right) \geq \ep^{\lambda(\xi) + o_\ep(1)}$ with high probability. 
However, we expect that this can be proven using arguments similar to those used to show the existence of an exponent for Liouville graph distance in~\cite{dzz-heat-kernel}.

By~\eqref{eqn-lambda-gamma}, Watabiki's prediction~\eqref{eqn-watabiki} is equivalent to $\lambda(\xi)  =  \xi^2 $, $\forall \xi \in [0,2/d_2]$: indeed, $d_\gamma^{\op{Wat}}$ is the positive solution to $d_\gamma^2  - \gamma Q d_\gamma =  \gamma^2  $, and dividing this by $d_\gamma^2$ gives $1-\xi Q  = \xi^2  $. 

One might guess that $\lambda(\xi)$ is an analytic function of $\xi$ at least up until the smallest $\xi > 0$ for which $\lambda(\xi) = 1$. Indeed, as explained in Section~\ref{sec-cc}, $\lambda(\xi) = 1$ corresponds to the critical point at which the ``background charge" $Q$ is zero and the ``central charge" $\cc$ is 25.
We do not know if there is a finite value of $\xi_* > 0$ for which $\lambda(\xi_*) =1$, but if such a $\xi_*$ exists we expect that LFPP is in some sense degenerate for $\xi > \xi_*$. The vast majority of exponents associated with LQG depend analytically on their parameter values in the range for which the objects are non-degenerate. 
For example, the KPZ formula extends analytically to the case when $\cc \in (1,25)$~\cite[Theorem 1.5]{ghpr-central-charge}.\footnote{If one assumes that a metric on LQG with $\xi  > 2/d_2$ and $Q\in (0,2)$ exists and satisfies certain axioms (which should be satisfied if LFPP for $\xi > 2/d_2$ has a scaling limit), then one can show that the formulas for exponents / dimensions from~\cite{gp-kpz,lqg-metric-estimates} extend analytically to this regime, provided they still give finite positive answers. Examples of such formulas include the optimal H\"older exponent of the Euclidean metric w.r.t.\ the LQG metric~\cite[Theorem 1.7]{lqg-metric-estimates} and the LQG dimension of the $\alpha$-thick points of the field~\cite[Theorem 1.5]{gp-kpz}. The proofs of these formulas should be the same as the proofs in~\cite{gp-kpz,lqg-metric-estimates}, but such proofs have not been written down.}
We emphasize, though, that the analyticity of $\lambda(\xi)$ for $\xi < \xi_*$ is only a guess, and we would not be very surprised if this guess turns out to be false. 

If $\lambda(\xi)$ were analytic for $\xi < \xi_*$, then we would have the following extension of Watabiki's prediction from the case when $\xi \in (0,2/d_2]$ to the case of general $\xi >0$:
\eqb \label{eqn-watabiki-extend}
\lambda^{\op{Wat}}(\xi)  =  \min\{\xi^2 ,1\}  ,\quad\forall \xi  \geq 0 .
\eqe
We will show in Corollary~\ref{cor-watabiki-contradict} that~\eqref{eqn-watabiki-extend} is false for a specific subset of $[2/d_2,\infty)$.

\section{Main results}
\label{sec-results}

The starting point of our main results is the following comparison of LFPP lengths of a path for different values of $\xi$, which will be proven (via a one-page argument) in Section~\ref{sec-proof}.

\begin{thm} \label{thm-length-compare}
Let $0 \leq \wt\xi \leq \xi  $ and fix a small parameter $\zeta > 0$.  
With probability tending to 1 as $\ep\rta 0$, each simple path $P$ in $\BB S^\ep$ with $D_h^{\xi,\ep}$-length $L_h^{\xi,\ep}(P) \leq \ep^{\lambda(\xi) -\zeta}$ satisfies
\eqb \label{eqn-length-compare}
L_h^{\wt\xi,\ep}(P) \leq  \ep^{\lambda(\xi)   -  (\xi-\wt\xi) \left( \sqrt{2 + 2 \lambda(\xi) + \xi^2}  -\xi  \right) - \zeta } .
\eqe 
\end{thm}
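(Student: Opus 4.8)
The plan is to prove the estimate by combining a convexity (equivalently, H\"older) inequality comparing $L_h^{\wt\xi,\ep}(P)$ to $L_h^{\xi,\ep}(P)$ with a suitable upper bound on the number of vertices of $P$. Write $\lambda = \lambda(\xi)$ and let $N+1$ denote the number of vertices of $P$. Since the map $s\mapsto \log\sum_j e^{s\, h_\ep(P(j))}$ is convex (being, up to an additive constant, a cumulant generating function), the map $\wt\xi\mapsto\log L_h^{\wt\xi,\ep}(P)$ is convex; applying this with $\wt\xi = \theta\xi$, $\theta := \wt\xi/\xi\in[0,1]$, and noting $L_h^{0,\ep}(P) = \ep(N+1)$ gives
\[
L_h^{\wt\xi,\ep}(P) \;\le\; L_h^{0,\ep}(P)^{1-\theta}\, L_h^{\xi,\ep}(P)^{\theta} \;=\; \big(\ep(N+1)\big)^{1-\theta}\, L_h^{\xi,\ep}(P)^{\theta} \;\le\; \big(\ep(N+1)\big)^{1-\theta}\,\ep^{\theta(\lambda-\zeta)} .
\]
Thus the theorem reduces to showing that, with probability tending to $1$ as $\ep\to0$ and simultaneously for every simple path $P$ with $L_h^{\xi,\ep}(P)\le\ep^{\lambda-\zeta}$, one has $N+1\le\ep^{-\beta - o_\ep(1)}$ with $\beta := 1-\lambda-\xi^2 + \xi\sqrt{2+2\lambda+\xi^2}$. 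Indeed, substituting this bound and using the identities $1-\beta = \lambda - \xi\big(\sqrt{2+2\lambda+\xi^2}-\xi\big)$ and $(1-\theta)\xi = \xi-\wt\xi$, the exponent of $\ep$ on the right simplifies to $\lambda - (\xi-\wt\xi)\big(\sqrt{2+2\lambda+\xi^2}-\xi\big) - \theta\zeta - o_\ep(1)$, which is $\ge \lambda - (\xi-\wt\xi)\big(\sqrt{2+2\lambda+\xi^2}-\xi\big) - \zeta$ for $\ep$ small enough (and the case $\theta = 1$, i.e.\ $\wt\xi=\xi$, is trivial).

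To bound $N+1$, set $a := \sqrt{2+2\lambda+\xi^2}-\xi > 0$ and split the vertices of $P$ according to whether $h_\ep(P(j))\ge -a\log\ep^{-1}$ or $h_\ep(P(j)) < -a\log\ep^{-1}$. Each vertex of the first type contributes at least $\ep^{\xi a}$ to $\sum_j e^{\xi h_\ep(P(j))} = \ep^{-1}L_h^{\xi,\ep}(P)\le\ep^{\lambda-1-\zeta}$, so there are at most $\ep^{\lambda-1-\xi a-\zeta}$ of them; this is a deterministic consequence of the length bound. Since $P$ is simple, the number of vertices of the second type is at most $\#\{z\in\BB S^\ep : h_\ep(z) < -a\log\ep^{-1}\}$. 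Using that $h_\ep(z)$ is a centered Gaussian of variance $\log\ep^{-1} + O_\ep(1)$ uniformly over $\BB S$ (see Footnote~\ref{footnote-lfpp-variants}), the standard Gaussian tail bound gives $\BB P[h_\ep(z) < -a\log\ep^{-1}]\le \ep^{a^2/2 - o_\ep(1)}$ for each $z$; a union bound over the $O(\ep^{-2})$ vertices of $\BB S^\ep$ together with Markov's inequality then shows that with probability tending to $1$, $\#\{z\in\BB S^\ep : h_\ep(z) < -a\log\ep^{-1}\}\le\ep^{a^2/2 - 2 - o_\ep(1)}$. The value $a = \sqrt{2+2\lambda+\xi^2}-\xi$ is precisely the one equalizing the two exponents $\lambda - 1 - \xi a$ (decreasing in $a$) and $a^2/2 - 2$ (increasing in $a$), and at this value both equal $-\beta$. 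Hence, on a single high-probability event, $N+1 \le \ep^{\lambda-1-\xi a-\zeta} + \ep^{a^2/2-2-o_\ep(1)} \le \ep^{-\beta - o_\ep(1)}$ for every admissible $P$, as required.

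The convexity (H\"older) step and the Gaussian tail estimate are routine, as is the bookkeeping of the $o_\ep(1)$ and $\zeta$-proportional errors. The substantive point is the bound on $N+1$: the trivial bound $N+1\le\#\BB S^\ep = O(\ep^{-2})$ (i.e.\ taking $\beta = 2$ above) is too weak and yields a strictly worse exponent, so one must genuinely exploit both that a path of small $\xi$-LFPP length meets few vertices where the field is not very negative, and that vertices where the field is very negative are rare. Optimizing the threshold $a = \sqrt{2+2\lambda+\xi^2}-\xi$ between these two competing bounds is exactly what produces the exponent appearing in~\eqref{eqn-length-compare}, and is the only step where the precise exponent — rather than merely some exponent — is used.
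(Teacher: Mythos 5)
Your proof is correct, and it reorganizes the paper's argument in a genuinely different (if closely related) way. The paper proves the estimate by directly splitting the sum defining $L_h^{\wt\xi,\ep}(P)$ into vertices where $h_\ep < \alpha\log\ep$ and the rest, bounding the first group's contribution via Lemma~\ref{lem-bad-circle-count} and the second via $e^{\wt\xi h_\ep} \leq \ep^{-(\xi-\wt\xi)\alpha}e^{\xi h_\ep}$, then optimizing $\alpha$. You instead prove the $\wt\xi=0$ case first --- i.e.\ the vertex-count bound, which is precisely Corollary~\ref{cor-geodesic-dim-upper} --- by the same split and the same optimization, and then upgrade to general $\wt\xi \in [0,\xi]$ via the convexity of $s \mapsto \log\sum_j e^{s h_\ep(P(j))}$ (H\"older with exponents $\frac{1}{1-\theta}, \frac{1}{\theta}$). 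The underlying probabilistic input (Lemma~\ref{lem-bad-circle-count}) and the choice of threshold $a = \alpha = \sqrt{2+2\lambda(\xi)+\xi^2}-\xi$ are identical; what your route buys is that it makes transparent both that the optimal threshold does not depend on $\wt\xi$ and that Theorem~\ref{thm-length-compare} is equivalent to the geodesic-dimension bound, while the paper goes the other direction and derives Corollary~\ref{cor-geodesic-dim-upper} from the theorem by specializing to $\wt\xi=0$. One small bookkeeping slip: since the first term in your bound on $N+1$ is $\ep^{\lambda-1-\xi a - \zeta} = \ep^{-\beta-\zeta}$, the displayed chain should read $N+1 \leq \ep^{-\beta-\zeta-o_\ep(1)}$ rather than $\ep^{-\beta-o_\ep(1)}$, which makes your final exponent $\lambda - (\xi-\wt\xi)a - \zeta - o_\ep(1)$ rather than $\lambda - (\xi-\wt\xi)a - \theta\zeta - o_\ep(1)$. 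This is the same additive-$o_\ep(1)$ slack that the paper silently absorbs (its two terms sum to $2\ep^{\lambda-(\xi-\wt\xi)\alpha-\zeta}$, not $\ep^{\lambda-(\xi-\wt\xi)\alpha-\zeta}$), and both are fixed by running the argument with $\zeta$ replaced by, say, $\zeta/2$; so it is not a gap, just a line that should be cleaned up.
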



\begin{cor}[Upper differential inequality for $\lambda$] \label{cor-lambda-mono}
If $ 0 \leq \wt\xi < \xi$, then
\eqb \label{eqn-lambda-mono}
\frac{\lambda(\xi) -  \lambda(\wt\xi) }{\xi - \wt \xi} \leq   \sqrt{2 + 2 \lambda(\xi) + \xi^2}  -\xi    .
\eqe 
In particular, for Lebesgue-a.e.\ $\xi  \geq 0$, 
\eqb \label{eqn-lambda-deriv}
\lambda'(\xi) \leq  \sqrt{2 + 2 \lambda(\xi) + \xi^2}  -\xi  .
\eqe 
\end{cor}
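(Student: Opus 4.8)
The plan is to obtain both displays of Corollary~\ref{cor-lambda-mono} from Theorem~\ref{thm-length-compare} by a short argument that just unwinds the definition~\eqref{eqn-lambda-def} of $\lambda$; the substantive work is already contained in the theorem, so this is essentially bookkeeping.

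Fix $0\le\wt\xi<\xi$ and a small $\zeta>0$. The first step is to record that, with probability tending to $1$ as $\ep\rta 0$, there is a \emph{simple} path $P$ from $\bdy_{\op{L}}\BB S^\ep$ to $\bdy_{\op{R}}\BB S^\ep$ with $L_h^{\xi,\ep}(P)\le\ep^{\lambda(\xi)-\zeta}$. Indeed, since $\BB S^\ep$ is a finite graph the infimum in~\eqref{eqn-lfpp-def} defining $D_h^{\xi,\ep}(\bdy_{\op{L}}\BB S^\ep,\bdy_{\op{R}}\BB S^\ep)$ is attained, and is attained by a simple path, since erasing loops only removes positively-weighted vertices. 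Moreover $\lambda(\xi)-\zeta$ is an admissible exponent in~\eqref{eqn-lambda-def}: the set of admissible $\alpha$ is a down-set, because for $\ep\in(0,1)$ and $\alpha'<\alpha$ one has $\ep^{\alpha'}\ge\ep^\alpha$, hence $\{D_h^{\xi,\ep}\le\ep^\alpha\}\subset\{D_h^{\xi,\ep}\le\ep^{\alpha'}\}$, so admissibility of $\alpha$ forces admissibility of $\alpha'$. Consequently $\BB P[D_h^{\xi,\ep}(\bdy_{\op{L}}\BB S^\ep,\bdy_{\op{R}}\BB S^\ep)\le\ep^{\lambda(\xi)-\zeta}]\rta1$, and on this event the minimizing simple path $P$ works.

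Next I would intersect this event with the high-probability event produced by Theorem~\ref{thm-length-compare} (applied with the same $\wt\xi$, $\xi$, $\zeta$). On the intersection, $P$ is a simple path with $L_h^{\xi,\ep}(P)\le\ep^{\lambda(\xi)-\zeta}$, so~\eqref{eqn-length-compare} applies to it, and since $P$ joins the two sides of the square,
\[
D_h^{\wt\xi,\ep}(\bdy_{\op{L}}\BB S^\ep,\bdy_{\op{R}}\BB S^\ep)\;\le\;L_h^{\wt\xi,\ep}(P)\;\le\;\ep^{\lambda(\xi)-(\xi-\wt\xi)(\sqrt{2+2\lambda(\xi)+\xi^2}-\xi)-\zeta}.
\]
Since this happens with probability tending to $1$, the definition~\eqref{eqn-lambda-def} of $\lambda(\wt\xi)$ yields $\lambda(\wt\xi)\ge\lambda(\xi)-(\xi-\wt\xi)(\sqrt{2+2\lambda(\xi)+\xi^2}-\xi)-\zeta$. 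As $\zeta>0$ was arbitrary, rearranging gives~\eqref{eqn-lambda-mono}.

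Finally, for~\eqref{eqn-lambda-deriv}: by Lemma~\ref{lem-lambda-basic} the function $\lambda$ is $2$-Lipschitz on $[0,\infty)$, hence differentiable at Lebesgue-a.e.\ $\xi$. At any such $\xi$, I would send $\wt\xi\uparrow\xi$ in~\eqref{eqn-lambda-mono}: the left-hand side converges to $\lambda'(\xi)$ while the right-hand side does not involve $\wt\xi$, so the inequality passes to the limit. The only point that needs any care is the matching of quantifiers — Theorem~\ref{thm-length-compare} controls $L_h^{\wt\xi,\ep}$ for \emph{every} sufficiently short simple path simultaneously with high probability, which is exactly what lets us apply it to the random near-geodesic between $\bdy_{\op{L}}\BB S^\ep$ and $\bdy_{\op{R}}\BB S^\ep$; past that there is no real obstacle, only the definition of $\lambda$ and the standard a.e.\ differentiability of Lipschitz functions.
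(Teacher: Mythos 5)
Your proof is correct and follows the same route as the paper's: apply Theorem~\ref{thm-length-compare} to a shortest (hence simple, after loop-erasure) $D_h^{\xi,\ep}$-path between $\bdy_{\op{L}}\BB S^\ep$ and $\bdy_{\op{R}}\BB S^\ep$, read off the bound on $L_h^{\wt\xi,\ep}$, feed it back into the definition~\eqref{eqn-lambda-def} of $\lambda(\wt\xi)$, and let $\zeta\to0$; the derivative inequality then follows from Lipschitz continuity. The paper compresses this into two sentences, while you correctly spell out the down-set structure of the admissible exponents and the simplicity of the minimizer, both of which are genuine (if routine) points that the paper leaves implicit.
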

\begin{proof}
The relation~\eqref{eqn-lambda-mono} is immediate from the definition~\eqref{eqn-lambda-def} of $\lambda$ and Theorem~\ref{thm-length-compare} applied to a path from $\bdy_{\op{L}} \BB S^\ep$ to $\bdy_{\op{R}} \BB S^\ep$ with minimal $D_h^{\xi,\ep}$-length. 
This relation implies~\eqref{eqn-lambda-deriv} since $\xi\mapsto \lambda(\xi)$ is Lipschitz, hence differentiable a.e. 
\end{proof}

Corollary~\ref{cor-lambda-mono} complements the monotonicity relation from~\cite[Lemma 2.5]{dg-lqg-dim} which shows that for $0 \leq \wt\xi \leq \xi$, 
\eqb \label{eqn-dg-mono}
\lambda(\wt\xi) + \frac{\wt\xi^2}{2} \leq \lambda(\xi) + \frac{\xi^2}{2} \quad \text{and hence} \quad \lambda'(\xi) \geq - \xi .
\eqe
By combining these two differential inequalities and the fact that $\lambda(0) = 0$ and $\lambda(1/\sqrt 6) = 1/6$ (see the discussion just below~\eqref{eqn-lambda-gamma}), we get the following theorem.

\begin{thm}[Bounds for $\lambda(\xi)$] \label{thm-lambda-bound}
For $\xi  \geq 0$, 
\eqb \label{eqn-lambda-bound}
\ul\lambda(\xi) \leq \lambda(\xi) \leq \ol\lambda(\xi)
\eqe
where
\eqb \label{eqn-lambda-lower}
\ul\lambda(\xi) := 
\begin{dcases}
\max\left\{ \left(\sqrt{\frac{5}{2}} - \frac{1}{\sqrt 6} \right) \xi  - \frac{\sqrt{15}-2}{6}   ,   - \frac{\xi^2}{2}     \right\} ,\quad &\xi \leq \frac{1}{\sqrt 6}  \\
\max\left\{ \frac{1}{4} - \frac{\xi^2}{2} ,  - \frac12 \right\} \quad &\xi \geq \frac{1}{\sqrt 6} 
\end{dcases}
\eqe 
and 
\eqb \label{eqn-lambda-upper}
\ol\lambda(\xi) := 
\begin{dcases}
\min\left\{    \frac{1}{4} - \frac{\xi^2}{2} ,  \sqrt{2}\xi    \right\} ,\quad &\xi \leq \frac{1}{\sqrt 6}  \\
\min\left\{  \left(\sqrt{\frac{5}{2}} - \frac{1}{\sqrt 6} \right) \xi  - \frac{\sqrt{15} -2}{6}    ,   1     \right\} ,\quad &\xi \geq \frac{1}{\sqrt 6} .
\end{dcases}
\eqe  
\end{thm}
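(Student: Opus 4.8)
The plan is to integrate the two differential inequalities we already have in hand --- the upper bound $\lambda'(\xi)\le \sqrt{2+2\lambda(\xi)+\xi^2}-\xi$ for a.e.\ $\xi$ from Corollary~\ref{cor-lambda-mono}, i.e.\ \eqref{eqn-lambda-deriv}, and the lower bound $\lambda'(\xi)\ge -\xi$ from \eqref{eqn-dg-mono} --- starting from the two known values $\lambda(0)=0$ and $\lambda(1/\sqrt6)=1/6$ (see the discussion just below~\eqref{eqn-lambda-gamma}) and using the a priori bounds $\lambda(\xi)\in[-1/2,1]$ from Lemma~\ref{lem-lambda-basic}. The device that makes this painless is the substitution $f(\xi):=\sqrt{2+2\lambda(\xi)+\xi^2}$, which is well defined with $f\ge 1$ since $\lambda\ge -1/2$. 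Because $\lambda$ is Lipschitz and the radicand is bounded away from $0$ on compacts, $f$ is locally Lipschitz, hence absolutely continuous, with $f'=(\lambda'+\xi)/f$ a.e.

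First I would convert the two differential inequalities into statements about $f$: $\lambda'\ge -\xi$ says $\lambda'+\xi\ge 0$, so $f'\ge 0$ a.e.\ and $f$ is non-decreasing; $\lambda'\le f-\xi$ says $\lambda'+\xi\le f$, so $f'\le 1$ a.e.\ and $f$ is $1$-Lipschitz. Equivalently, $0\le f(\xi_2)-f(\xi_1)\le \xi_2-\xi_1$ for $0\le \xi_1\le \xi_2$. Evaluating at the known points, $f(0)=\sqrt2$ and $f(1/\sqrt6)=\sqrt{2+1/3+1/6}=\sqrt{5/2}$. The whole problem is now reduced to the elementary fact that a non-decreasing $1$-Lipschitz function pinned at $(0,\sqrt2)$ and $(1/\sqrt6,\sqrt{5/2})$ is sandwiched between two explicit piecewise-linear functions, together with the inversion $\lambda(\xi)=\tfrac12(f(\xi)^2-2-\xi^2)$.

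For $\xi\le 1/\sqrt6$ I would use: $f(\xi)\le f(0)+\xi=\sqrt2+\xi$, giving $\lambda(\xi)\le \sqrt2\,\xi$; $f(\xi)\le f(1/\sqrt6)=\sqrt{5/2}$, giving $\lambda(\xi)\le 1/4-\xi^2/2$; $f(\xi)\ge f(0)=\sqrt2$, giving $\lambda(\xi)\ge -\xi^2/2$; and $f(\xi)\ge f(1/\sqrt6)-(1/\sqrt6-\xi)=(\sqrt{5/2}-1/\sqrt6)+\xi$, which after squaring and using $(\sqrt{5/2}-1/\sqrt6)^2=8/3-\sqrt{15}/3$ gives $\lambda(\xi)\ge (\sqrt{5/2}-1/\sqrt6)\xi-(\sqrt{15}-2)/6$. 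For $\xi\ge 1/\sqrt6$ I would use $f(\xi)\ge f(1/\sqrt6)=\sqrt{5/2}$, giving $\lambda(\xi)\ge 1/4-\xi^2/2$; and for the matching upper bound I would return to \eqref{eqn-lambda-deriv}, noting that $\lambda'(\xi)\le f(\xi)-\xi\le (f(1/\sqrt6)+(\xi-1/\sqrt6))-\xi=\sqrt{5/2}-1/\sqrt6$ for a.e.\ $\xi\ge 1/\sqrt6$, and integrating from $1/\sqrt6$ (where $\lambda=1/6$) to get $\lambda(\xi)\le (\sqrt{5/2}-1/\sqrt6)\xi-(\sqrt{15}-2)/6$. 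Intersecting each collection of bounds with $\lambda\ge -1/2$ and $\lambda\le 1$ yields precisely the $\ul\lambda$ and $\ol\lambda$ of \eqref{eqn-lambda-lower} and \eqref{eqn-lambda-upper}, and a short case check records which constituent is active on which subinterval.

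I do not expect a genuine obstacle: once the substitution $f=\sqrt{2+2\lambda+\xi^2}$ linearizes the two inequalities, the argument is just interpolation of a monotone $1$-Lipschitz function between two pinned values. The only points requiring care are (i) justifying the a.e.-to-pointwise passage, which is routine since $f$ is absolutely continuous (alternatively one can work directly with the discrete inequality \eqref{eqn-lambda-mono} and avoid differentiating at all), and (ii) the bookkeeping: verifying $(\sqrt{5/2}-1/\sqrt6)^2=8/3-\sqrt{15}/3$ so that $\tfrac12((\sqrt{5/2}-1/\sqrt6+\xi)^2-2-\xi^2)$ is the claimed line, and checking that the $\max$/$\min$ in \eqref{eqn-lambda-lower} and \eqref{eqn-lambda-upper} correctly encode where each bound dominates.
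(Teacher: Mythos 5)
Your proof is correct and takes essentially the same approach as the paper's: both combine the difference/differential inequalities from Corollary~\ref{cor-lambda-mono} and~\eqref{eqn-dg-mono} with the pinned values $\lambda(0)=0$, $\lambda(1/\sqrt6)=1/6$ and the a priori bounds of Lemma~\ref{lem-lambda-basic}. Your substitution $f(\xi)=\sqrt{2+2\lambda(\xi)+\xi^2}$, recasting the two inequalities as ``$f$ is nondecreasing and $1$-Lipschitz,'' is a tidy organizational device making the paper's terse ``solving for $\lambda(\xi)$'' step transparent, but the underlying algebra and structure are identical.
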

\begin{proof}
Recall from Lemma~\ref{lem-lambda-basic} that $\lambda(\xi) \in [-1/2, 1]$ for all $\xi\geq 0$. 
Since $\lambda(0) = 0$, by setting $\wt\xi = 0$ in~\eqref{eqn-lambda-mono} and~\eqref{eqn-dg-mono} and solving for $\lambda(\xi)$, we get
\eqb \label{eqn-around0}
- \frac{\xi^2}{2}  \leq  \lambda(\xi)  \leq \sqrt{2}\xi   ,\quad\forall \xi \geq 0. 
\eqe
Since $\lambda(1/\sqrt 6) = 1/6$, by setting $\wt\xi = 1/6$ in~\eqref{eqn-lambda-mono} and~\eqref{eqn-dg-mono} and solving for $\lambda(\xi)$, we get  
\eqb \label{eqn-around-4}
 \frac{1}{4} - \frac{\xi^2}{2} \leq  \lambda(\xi) \leq  \left(\sqrt{\frac{5}{2}} - \frac{1}{\sqrt 6} \right) \xi  - \frac{\sqrt{15}-2}{6} ,\quad\forall\xi \geq 1/\sqrt 6 .
\eqe
By instead setting $\xi = 1/6$ and solving for $\lambda(\wt\xi)$, we get~\eqref{eqn-around-4} with the inequality signs flipped for $\xi \leq 1/\sqrt 6$. 
Combining these inequalities gives~\eqref{eqn-lambda-bound}. 
\end{proof}

See Figure~\ref{fig-lambda-bound}, left, for a plot of the bounds~\eqref{eqn-lambda-bound}. 
At the time this paper was written, the bounds for $\lambda(\xi)$ from Theorem~\ref{thm-lambda-bound} were the best known except when $\xi$ is very small (non-explicit), in which case~\cite{ding-goswami-watabiki} gives $\lambda(\xi) \geq  c \xi^{4/3}/\log(1/\xi) $ for a non-explicit universal constant $c>0$. However, very recently improved lower bounds have been obtained in some cases; see Remark~\ref{remark-ang}. From Corollary~\ref{cor-lambda-mono} and Theorem~\ref{thm-lambda-bound}, we get the following.
 
\begin{cor} \label{cor-watabiki-contradict}
The extended Watabiki prediction~\eqref{eqn-watabiki-extend} is false on a dense subset of $\{ \xi \geq 1/\sqrt 3 : \lambda(\xi) < 1 \}$ and for all $\xi\in \left(\sqrt{\frac52}-\sqrt{\frac23} , \frac{4 +\sqrt{15}}{ \sqrt 2 ( 3 \sqrt{5} - \sqrt 3)} \right)\approx \left(0.7646 ,  1.1187  \right)$. 
\end{cor}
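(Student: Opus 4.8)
The plan is to handle the two assertions with the two tools already established above: the explicit linear upper bound for $\lambda$ from Theorem~\ref{thm-lambda-bound} for the explicit-interval statement, and the differential inequality of Corollary~\ref{cor-lambda-mono} for the density statement.

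For the explicit interval, I would abbreviate $\ell(\xi) := \left(\sqrt{5/2} - 1/\sqrt 6\right)\xi - (\sqrt{15}-2)/6$, which is exactly the line through $(1/\sqrt 6, 1/6)$ with slope $\sqrt{5/2}-1/\sqrt 6$ produced by~\eqref{eqn-lambda-mono}, so that by Theorem~\ref{thm-lambda-bound} we have $\lambda(\xi) \le \ol\lambda(\xi) = \min\{\ell(\xi),1\} \le \ell(\xi)$ for all $\xi \ge 1/\sqrt 6$. Then I would record two elementary facts. First, the upward-opening parabola $\xi \mapsto \xi^2 - \ell(\xi)$ vanishes at $\xi = 1/\sqrt 6$ (since $\ell(1/\sqrt 6) = 1/6$) and, because the product of its two roots is $(\sqrt{15}-2)/6$, also at $\xi = \sqrt{5/2} - \sqrt{2/3}$; hence $\ell(\xi) < \xi^2$ whenever $\xi > \sqrt{5/2} - \sqrt{2/3}$. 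Second, solving $\ell(\xi) = 1$ and simplifying $\sqrt{5/2} - 1/\sqrt 6 = (\sqrt{15}-1)/\sqrt 6$ gives $\ell(\xi) < 1 \iff \xi < (4+\sqrt{15})/\big(\sqrt 2\,(3\sqrt 5 - \sqrt 3)\big)$. Since $\sqrt{5/2}-\sqrt{2/3} < 1 < (4+\sqrt{15})/(\sqrt 2\,(3\sqrt 5 - \sqrt 3))$, for $\xi$ in the asserted interval I would split on whether $\xi < 1$: if $\xi < 1$ then $\lambda^{\op{Wat}}(\xi) = \xi^2 > \ell(\xi) \ge \lambda(\xi)$, while if $\xi \ge 1$ then $\lambda^{\op{Wat}}(\xi) = 1 > \ell(\xi) \ge \lambda(\xi)$; in either case $\lambda(\xi) \ne \lambda^{\op{Wat}}(\xi)$.

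For the density statement I would argue by contradiction. Suppose $\lambda \equiv \lambda^{\op{Wat}}$ on some non-degenerate open interval $I$ contained in $\{\xi \ge 1/\sqrt 3 : \lambda(\xi) < 1\}$; shrinking $I$ we may assume $I \subset (1/\sqrt 3,\infty)$. On $I$ the inequality $\lambda(\xi) < 1$ forces $\min\{\xi^2,1\} = \xi^2$, so $\lambda(\xi) = \xi^2$ on $I$, whence $\lambda$ is smooth on $I$ with $\lambda'(\xi) = 2\xi$. But Corollary~\ref{cor-lambda-mono} gives $\lambda'(\xi) \le \sqrt{2 + 2\xi^2 + \xi^2} - \xi = \sqrt{2+3\xi^2}-\xi$ for a.e.\ $\xi \in I$, and squaring $\sqrt{2+3\xi^2} < 3\xi$ shows $\sqrt{2+3\xi^2}-\xi < 2\xi$ precisely when $\xi > 1/\sqrt 3$; this contradicts $\lambda'(\xi) = 2\xi$ on a set of positive measure. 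Hence the set $\{\lambda = \lambda^{\op{Wat}}\}$ contains no non-degenerate subinterval of $\{\xi \ge 1/\sqrt 3 : \lambda(\xi) < 1\}$, and since the latter set is relatively open in $[1/\sqrt 3,\infty)$ (by continuity of $\lambda$), this is exactly the statement that $\{\lambda \ne \lambda^{\op{Wat}}\}$ is dense in it.

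I do not expect a genuine obstacle: both parts reduce entirely to inequalities already in hand from Corollary~\ref{cor-lambda-mono} and Theorem~\ref{thm-lambda-bound}. The only points needing care are the algebraic identifications of the two interval endpoints with the larger root of $\xi^2 = \ell(\xi)$ and with the solution of $\ell(\xi) = 1$ (both routine once one notes that $\ell$ is the linear bound anchored at $\xi = 1/\sqrt 6$, $\lambda = 1/6$), the case split at $\xi = 1$ in the definition of $\lambda^{\op{Wat}}$, and the mild point-set topology passing from ``no subinterval'' to ``dense''.
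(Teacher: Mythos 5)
Your argument is correct and follows essentially the same route as the paper: the density assertion via the differential inequality~\eqref{eqn-lambda-deriv} (noting that $\lambda(\xi)=\xi^2$ forces $\lambda'(\xi)=2\xi$, which exceeds $\sqrt{2+3\xi^2}-\xi$ precisely when $\xi>1/\sqrt 3$), and the explicit interval by checking that the linear upper bound $\ol\lambda(\xi)$ from~\eqref{eqn-lambda-upper} lies strictly below both $\xi^2$ and $1$ on that range. Your write-up is slightly more explicit about the root-identification, the case split at $\xi=1$, and the ``no subinterval $\Rightarrow$ dense'' topology step, and you also have the inequality direction right where the paper's proof contains a small sign typo ($<$ should read $>$ in ``$\lambda'(\xi)=2\xi < \sqrt{2+2\xi^2+\xi^2}-\xi$'').
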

\begin{proof}
If $\lambda(\xi) = \xi^2$ holds on a neighborhood of $\xi$ and $\xi > 1/\sqrt 3$, then $\lambda'(\xi) = 2\xi < \sqrt{2+2\xi^2 + \xi^2} -\xi$, contrary to~\eqref{eqn-lambda-deriv}. The second statement follows since the upper bound in~\eqref{eqn-lambda-upper} is strictly less than $\xi^2$ for $\xi > \sqrt{\frac52}-\sqrt{\frac23}$ and strictly less than 1 for $\xi < \frac{4 +\sqrt{15}}{ \sqrt 2 ( 3 \sqrt{5} - \sqrt 3)} $. 
\end{proof}

As explained in Section~\ref{sec-cc}, the conditions in Corollary~\ref{cor-watabiki-contradict} correspond to central charge in $(17,25)$ and in $(21.741\dots,25)$, respectively. 
The combination of Corollary~\ref{cor-watabiki-contradict} and~\cite{ding-goswami-watabiki} shows that if Watabiki's prediction is true for a non-trivial interval of $\xi$-values, then $\lambda(\xi)$ must fail to be analytic at at least two different values of $\xi$ with $\lambda(\xi )  <1$. 
Since the existence of two non-analytic points would be more surprising than the existence of just one such point, this provides further evidence against the statement that Watabiki's prediction is true for a non-trivial interval of $\xi$-values. 

In contrast, all known bounds for $\lambda(\xi)$ (including those of Corollary~\ref{cor-lambda-mono}, Theorem~\ref{thm-lambda-bound}, and~\cite{ding-goswami-watabiki}) \emph{are} consistent with the alternative guess for the dimension of $\gamma$-LQG from~\cite[Equation (1.16)]{dg-lqg-dim}, namely $d_\gamma^{\op{DG}} = 2 + \frac{\gamma^2}{2} + \frac{\gamma}{\sqrt 6}$,
which is equivalent to $\lambda(\xi) = \xi/\sqrt 6$ for $\xi \in [0,2/d_2]$ and corresponds to the extended guess 
\eqb \label{eqn-dg-guess}
\lambda^{\op{DG}}(\xi) = \min\{\xi/\sqrt 6 , 1\} ,\quad \forall \xi \geq 0.
\eqe
Note that if $\lambda(\xi) = \lambda^{\op{DG}}(\xi)$, then the Ding-Goswami bound $\lambda(\xi) \geq  c \xi^{4/3}/\log(1/\xi)$ for small $\xi$ would be far from optimal. We emphasize, though, that there is currently no theoretical justification for the above alternative guess, even at a heuristic level.

\begin{figure}[t!]
 \begin{center}
\includegraphics[scale=.55]{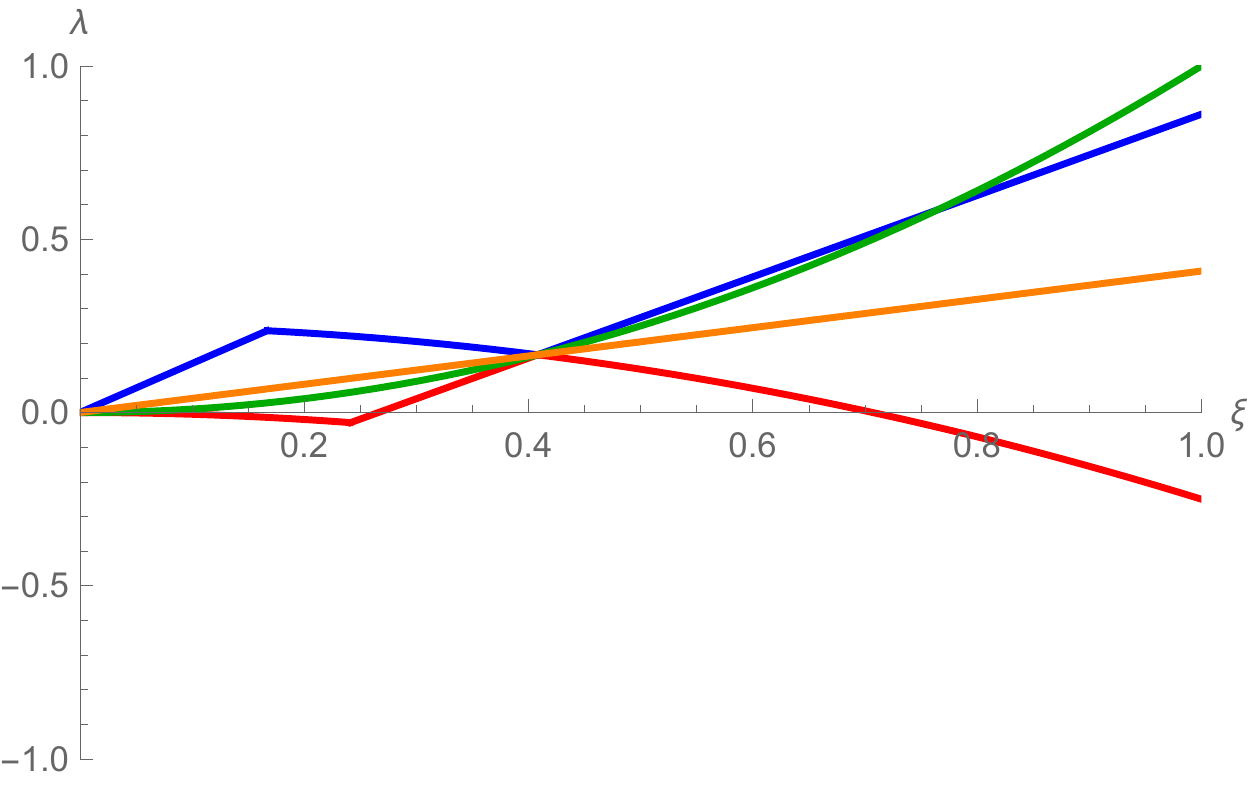} \hspace{15pt} \includegraphics[scale=.55]{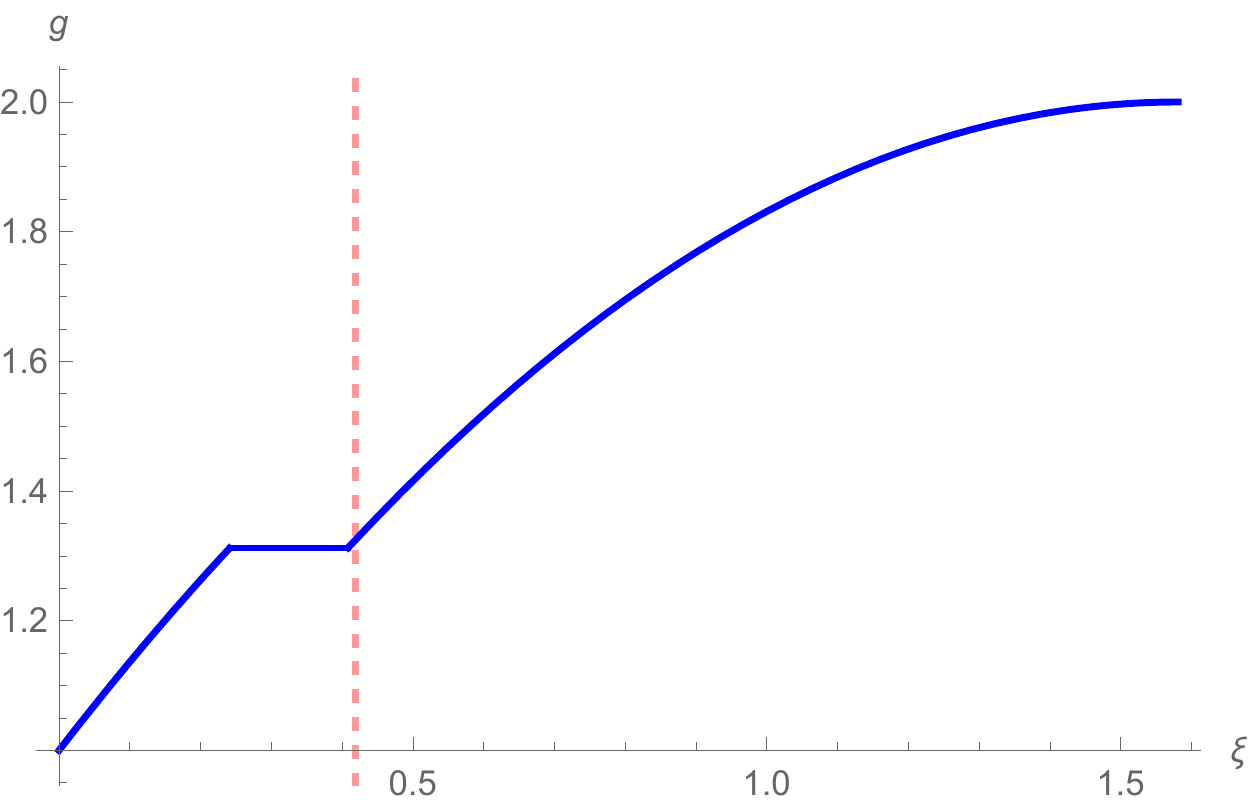}
\vspace{-0.01\textheight}
\caption{ \textbf{Left.} Graph of the lower bound $\ul \lambda(\xi)$ (red) and the upper bound $\ol \lambda(\xi)$ (blue) from Theorem~\ref{thm-lambda-bound} together with the extended Watabiki prediction $\lambda(\xi) = \max\{\xi^2,1\}$ (green) and the alternative guess $\lambda(\xi) = \max\{\xi/\sqrt 6 , 1\}$ (orange) for $\xi \in [0,1]$. 
\textbf{Right.} Graph of the upper bound for the geodesic dimension $g(\xi)$, obtained from plugging in the lower bound for $\lambda(\xi)$ from Theorem~\ref{thm-lambda-bound} into Corollary~\ref{cor-geodesic-dim-upper}, on the interval $[0,\sqrt{5/2}]$ (the range on which they are non-trivial). The upper bound is constant at $\frac16(4+\sqrt{15})\approx 1.31216$ for $\xi \in \left[ 0.241\dots , 1/\sqrt 6 \right]$ but we expect that $g(\xi)$ is strictly increasing, at least on $[0,2/d_2]$. The dashed red line is at our upper bound $2 - \sqrt{5/2} \approx 0.4189$ for $2/d_2$. The vertical coordinate where it crosses the graph is $2\sqrt{10} - 5\approx 1.3246$, which is (at least heuristically) an upper bound for the largest possible Euclidean dimension of a $\gamma$-LQG geodesic for $\gamma \in (0,2]$. 
}\label{fig-lambda-bound}
\end{center}
\vspace{-1em}
\end{figure} 

Using~\eqref{eqn-lambda-gamma}, we can translate Theorem~\ref{thm-lambda-bound} for $\xi\in (0,2/d_2]$ into bounds for the $\gamma$-LQG dimension.

\begin{cor}[Bounds for $d_\gamma$] \label{cor-d-bound}
For $\gamma \in (0,2)$, one has
\eqb 
\ul d_\gamma \leq d_\gamma \leq \ol d_\gamma
\eqe
for
\eqb \label{eqn-d-lower}
\ul d_\gamma := 
\begin{dcases}
\max\left\{  \frac{ 12 - \sqrt 6 \gamma + 3 \sqrt{10} \gamma + 3 \gamma^2}{4 + \sqrt{15}}    ,  \frac{2\gamma^2}{4+\gamma^2-\sqrt{16 +\gamma^4}}         \right\} ,\quad &\gamma \leq \sqrt{8/3}  \\
\frac13 \left( 4 + \gamma^2 +\sqrt{16 + 2 \gamma^2 + \gamma^4} \right) ,\quad &\gamma \geq \sqrt{8/3} 
\end{dcases}
\eqe 
and
\eqb  \label{eqn-d-upper}
\ol d_\gamma := 
\begin{dcases}
\min\left\{    \frac13 \left( 4 + \gamma^2 +\sqrt{16 + 2 \gamma^2 + \gamma^4} \right)  ,  2 + \frac{\gamma^2}{2} + \sqrt 2 \gamma \right\} ,\quad &\gamma \leq \sqrt{8/3}  \\
\frac{ 12 - \sqrt 6 \gamma + 3 \sqrt{10} \gamma + 3 \gamma^2}{4 + \sqrt{15}}  ,\quad &\gamma \geq \sqrt{8/3} 
\end{dcases} .
\eqe 
\end{cor}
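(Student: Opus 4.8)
The idea is to substitute $\xi = \gamma/d_\gamma$ into the bounds $\ul\lambda(\xi) \le \lambda(\xi) \le \ol\lambda(\xi)$ of Theorem~\ref{thm-lambda-bound} and to rewrite everything using~\eqref{eqn-lambda-gamma}, which with $Q = 2/\gamma + \gamma/2$ reads $\lambda(\gamma/d_\gamma) = 1 - \frac{2 + \gamma^2/2}{d_\gamma}$. Under this substitution each of the elementary bounds on $\lambda$ appearing in~\eqref{eqn-lambda-lower}--\eqref{eqn-lambda-upper} becomes, after clearing the (positive) denominators $d_\gamma$ and $d_\gamma^2$, a polynomial inequality in $d_\gamma$ of degree at most two. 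The relevant bounds are $\lambda(\xi) \ge -\xi^2/2$ and $\lambda(\xi) \le \sqrt 2\,\xi$ (valid for all $\xi$; these are~\eqref{eqn-around0}), $\lambda(\xi) \le \tfrac14 - \tfrac{\xi^2}{2}$ together with the lower linear bound $\lambda(\xi) \ge (\sqrt{5/2} - 1/\sqrt 6)\xi - (\sqrt{15}-2)/6$ (valid for $\xi \le 1/\sqrt 6$), and $\lambda(\xi) \ge \tfrac14 - \tfrac{\xi^2}{2}$ together with the upper linear bound $\lambda(\xi) \le (\sqrt{5/2}-1/\sqrt 6)\xi - (\sqrt{15}-2)/6$ (valid for $\xi \ge 1/\sqrt 6$). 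Since $d_\gamma > 2$, and since one checks that the smaller root of each of the three quadratics that arise is $< 2$ for every $\gamma \in (0,2)$, each such polynomial inequality forces $d_\gamma$ onto the appropriate side of the larger root; solving then reproduces exactly the expressions in~\eqref{eqn-d-lower}--\eqref{eqn-d-upper}. For instance $\lambda \ge -\xi^2/2$ yields $d_\gamma^2 - (2+\gamma^2/2)d_\gamma + \gamma^2/2 \ge 0$, hence $d_\gamma \ge 1 + \tfrac{\gamma^2}{4} + \tfrac14\sqrt{16+\gamma^4}$, which equals $\frac{2\gamma^2}{4 + \gamma^2 - \sqrt{16+\gamma^4}}$ after rationalizing; $\lambda \le \tfrac14 - \tfrac{\xi^2}{2}$ yields $\tfrac34 d_\gamma^2 - (2+\gamma^2/2)d_\gamma + \gamma^2/2 \le 0$, hence $d_\gamma \le \tfrac13\big(4+\gamma^2 + \sqrt{16 + 2\gamma^2 + \gamma^4}\big)$, while $\lambda \ge \tfrac14 - \tfrac{\xi^2}{2}$ yields the reverse inequality; $\lambda \le \sqrt 2\,\xi$ yields $d_\gamma \le 2 + \tfrac{\gamma^2}{2} + \sqrt 2\,\gamma$; and the two linear bounds rearrange to $d_\gamma \le$ (resp.\ $\ge$) $\frac{12 - \sqrt 6\gamma + 3\sqrt{10}\gamma + 3\gamma^2}{4 + \sqrt{15}}$.

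The only step that is not routine algebra is matching the case splits: Theorem~\ref{thm-lambda-bound} splits at $\xi = 1/\sqrt 6$ whereas Corollary~\ref{cor-d-bound} splits at $\gamma = \sqrt{8/3}$, so I need to know that $\gamma/d_\gamma \le 1/\sqrt 6$ if and only if $\gamma \le \sqrt{8/3}$. The key observation is that $\gamma \mapsto \gamma/d_\gamma$ is injective on $(0,2)$: if $\gamma_1/d_{\gamma_1} = \gamma_2/d_{\gamma_2} =: \xi > 0$, then~\eqref{eqn-lambda-gamma} gives $Q(\gamma_1)\,\xi = 1 - \lambda(\xi) = Q(\gamma_2)\,\xi$, so $Q(\gamma_1) = Q(\gamma_2)$, and $Q(\gamma) = 2/\gamma + \gamma/2$ is strictly decreasing, hence injective, on $(0,2)$. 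A continuous (recall $\gamma \mapsto d_\gamma$ is continuous~\cite{dg-lqg-dim}) injective function is strictly monotone; since $d_\gamma > 2$ forces $\gamma/d_\gamma < \gamma/2 \to 0$ as $\gamma \to 0^+$ while $\gamma/d_\gamma = 1/\sqrt 6$ at $\gamma = \sqrt{8/3}$ (because $d_{\sqrt{8/3}} = 4$), the function $\gamma \mapsto \gamma/d_\gamma$ is strictly increasing, which gives the desired equivalence.

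Granting this, the corollary follows by bookkeeping. For $\gamma \le \sqrt{8/3}$ we have $\xi = \gamma/d_\gamma \le 1/\sqrt 6$, so the bounds $\lambda \le \tfrac14 - \tfrac{\xi^2}2$, $\lambda \le \sqrt2\,\xi$, $\lambda \ge (\sqrt{5/2}-1/\sqrt6)\xi - (\sqrt{15}-2)/6$, and $\lambda \ge -\xi^2/2$ translate into $\ul d_\gamma \le d_\gamma \le \ol d_\gamma$ with the formulas of~\eqref{eqn-d-lower}--\eqref{eqn-d-upper}; for $\gamma \ge \sqrt{8/3}$ we have $\xi \ge 1/\sqrt 6$ and use $\lambda \ge \tfrac14 - \tfrac{\xi^2}2$ and $\lambda \le (\sqrt{5/2}-1/\sqrt6)\xi - (\sqrt{15}-2)/6$, the bounds $\lambda \le 1$ and $\lambda \ge -1/2$ giving, respectively, no information and a strictly weaker lower bound on $d_\gamma$. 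The remaining tasks --- computing the roots of the three quadratics, checking that each smaller root lies below $2$, and verifying the identity $1 + \tfrac{\gamma^2}{4} + \tfrac14\sqrt{16+\gamma^4} = \frac{2\gamma^2}{4+\gamma^2-\sqrt{16+\gamma^4}}$ --- are elementary. I expect the case-split matching of the second paragraph to be the only genuine obstacle.
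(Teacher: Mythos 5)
Your proposal is correct, and it follows the same route that the paper sketches: the paper offers no written proof of Corollary~\ref{cor-d-bound} beyond the one-sentence remark that one should substitute $\xi = \gamma/d_\gamma$ into Theorem~\ref{thm-lambda-bound} via the identity~\eqref{eqn-lambda-gamma}. You carry out exactly that substitution, and your algebra checks out: the quadratic bound $\lambda \ge -\xi^2/2$ does give $d_\gamma^2 - (2+\gamma^2/2)d_\gamma + \gamma^2/2 \ge 0$, the bounds $\lambda \lessgtr \tfrac14 - \tfrac{\xi^2}{2}$ give $\tfrac34 d_\gamma^2 - (2+\gamma^2/2)d_\gamma + \tfrac{\gamma^2}{2} \lessgtr 0$, and the linear bounds clear denominators to linear inequalities in $d_\gamma$, all producing the stated expressions (I checked in particular that the two expressions meeting at $\gamma = \sqrt{8/3}$ both evaluate to $4$ there, as they must).

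You correctly flag the case-split matching as the only point that requires thought. Your injectivity-plus-continuity argument for the strict monotonicity of $\gamma \mapsto \gamma/d_\gamma$ works, given that continuity of $\gamma \mapsto d_\gamma$ is established in~\cite{dg-lqg-dim}. It is worth noting that this monotonicity is in any case proved directly in~\cite{dg-lqg-dim}, and one could alternatively bypass continuity altogether: since $\xi \mapsto Q(\xi) = (1-\lambda(\xi))/\xi$ is non-increasing (as in Lemma~\ref{lem-Q-mono}, whose relevant part follows from~\eqref{eqn-dg-mono} alone) and $Q(\gamma/d_\gamma) = 2/\gamma + \gamma/2$ is strictly decreasing in $\gamma$ on $(0,2)$, any $\gamma_1 < \gamma_2$ with $\gamma_1/d_{\gamma_1} \ge \gamma_2/d_{\gamma_2}$ would force $Q(\gamma_1/d_{\gamma_1}) \le Q(\gamma_2/d_{\gamma_2})$, a contradiction. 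Either way, the proposal is a correct and complete filling-in of the proof the paper leaves implicit.
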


See Figure~\ref{fig-d-bound} for a plot of the bounds for Corollary~\ref{cor-d-bound}, the previous best known bounds from~\cite{dg-lqg-dim}, and the Watabiki prediction~\eqref{eqn-watabiki}. 
The new bounds are still consistent with Watabiki's prediction for $\gamma \in (0,2]$ (since $2/d_2  < 1/\sqrt 3$). 
The upper (resp.\ lower) bound from Corollary~\ref{cor-d-bound} is strictly better than previously known bounds in the case when $\gamma \geq \sqrt{8/3}$ (resp.\ $\gamma \in (0.4981,\sqrt{8/3})$). 
For $\gamma \geq \sqrt{8/3}$, the upper bound differs from Watabiki's prediction by at most $0.008$.

\begin{figure}[t!]
 \begin{center}
\includegraphics[scale=.55]{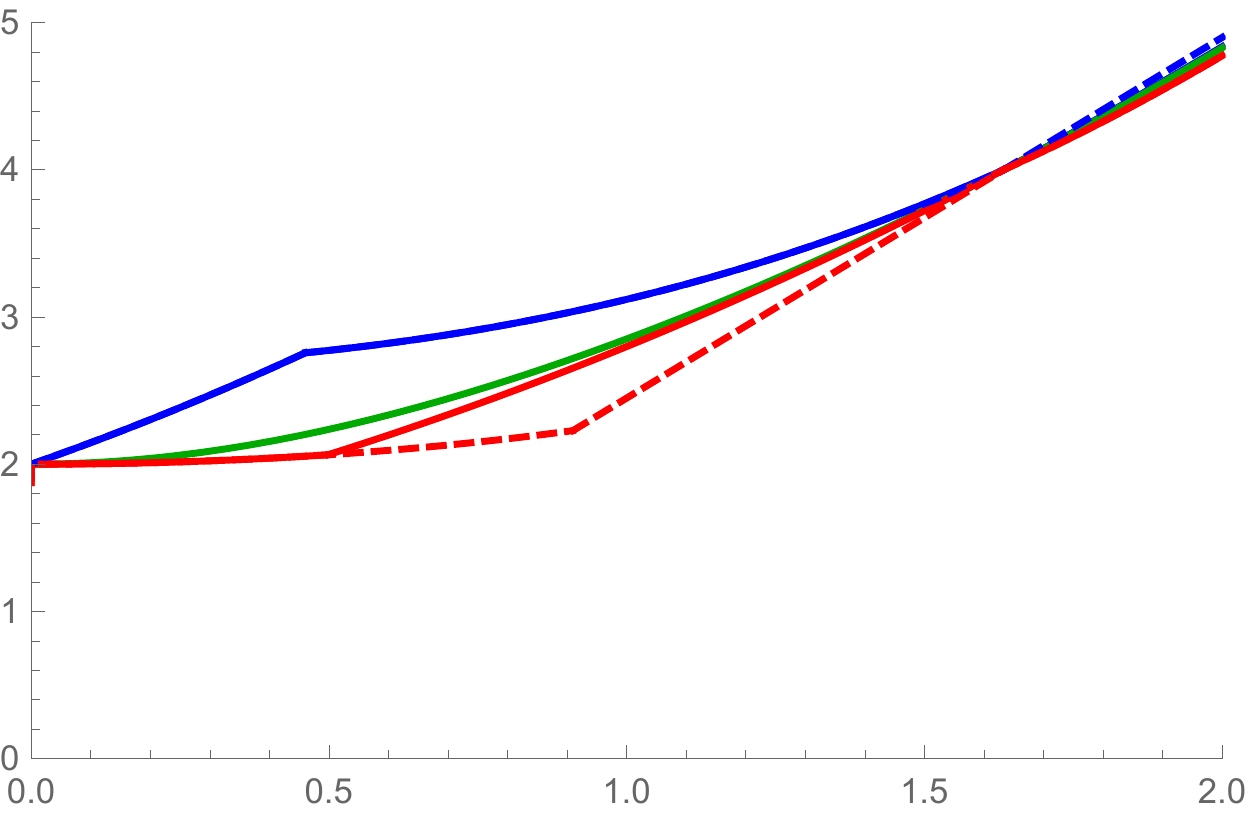} \hspace{15pt} \includegraphics[scale=.55]{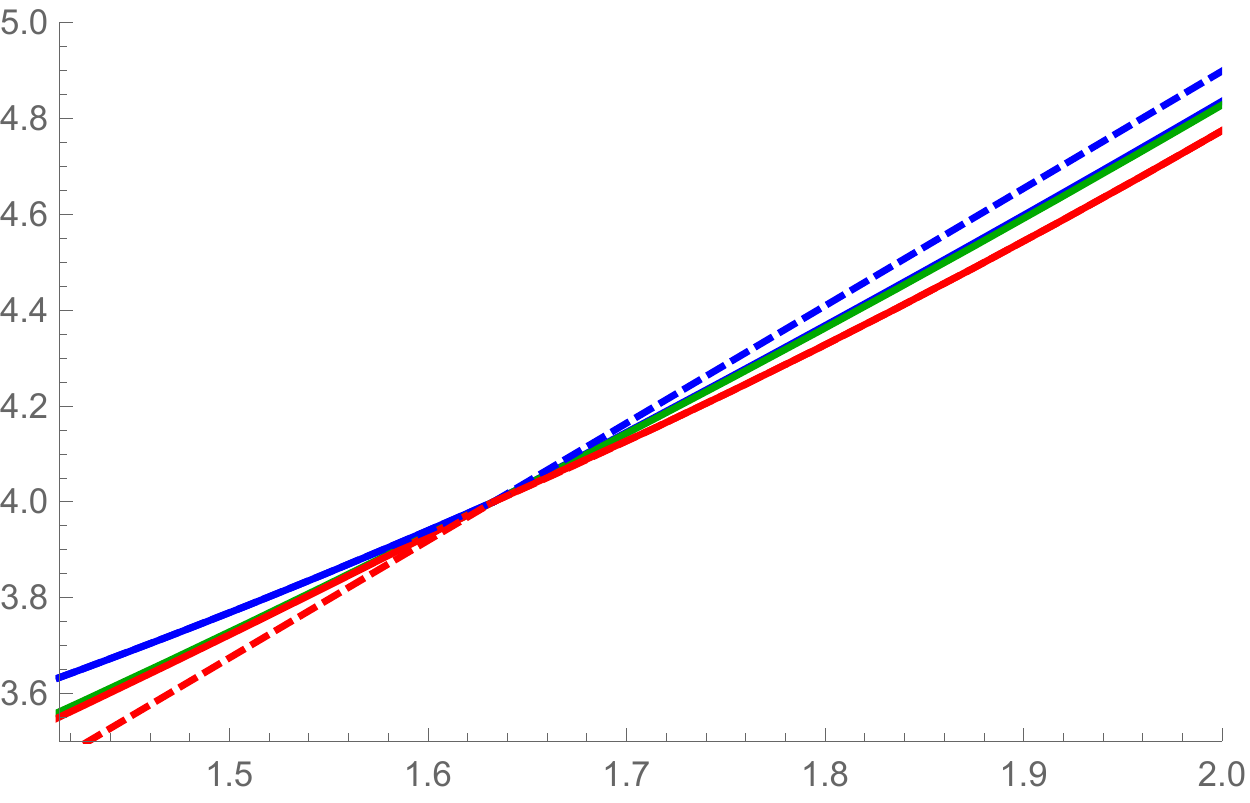}
\vspace{-0.01\textheight}
\caption{ \textbf{Left.} Graph of the lower bound $\ul d_\gamma$ (red) and the upper bound $\ol d_\gamma$ (blue) from Corollary~\ref{cor-d-bound} together with the Watabiki prediction $d_\gamma^{\op{Wat}}$ from~\eqref{eqn-watabiki} (green) and the previous best known bounds (dashed blue and dashed red). Note that the bounds $\ul d_\gamma \leq d_\gamma \leq \ol d_\gamma$ are consistent with the Watabiki prediction (so the only contradiction to Watabiki for $\gamma \in (0,2)$ is still~\cite{ding-goswami-watabiki}).  
\textbf{Right.} Graph of the same functions but restricted to the interval $[\sqrt 2 ,2]$.  
}\label{fig-d-bound}
\end{center}
\vspace{-1em}
\end{figure}

\begin{remark} \label{remark-ang}
The recent paper~\cite{ang-discrete-lfpp} shows that $\lambda(\xi) \geq 0$ for a slightly different definition of $\lambda(\xi)$, using distance between the inner and outer boundaries of an annulus, rather than distance across a circle. All of our arguments still work with this alternative definition of $\lambda(\xi)$ (the definitions are known to be equivalent when $\xi \leq 2/d_2$, and we expect that they are equivalent for all $\xi \geq 0$). 
The bound $\lambda(\xi) \geq 0$ improves on our lower bound for $\lambda(\xi)$ from Theorem~\ref{thm-lambda-bound} in the case when $\xi \in (0,0.2661\dots) \cup (1/\sqrt 2 , \infty)$.
By~\eqref{eqn-lambda-gamma}, for $\gamma \in (0,2)$, $\lambda(\xi ) \geq 0$ implies that $d_\gamma \geq 2+\gamma^2/2$, which improves on our lower bound for $d_\gamma$ from Corollary~\ref{cor-d-bound} in the case when $\gamma \in (0, 0.5765\dots)$. 
\end{remark}

\subsection*{Geodesic dimension}

In addition to the Hausdorff dimension/distance exponent, another natural quantity associated with the LQG metric is the Euclidean dimension of its geodesics. 
Theorem~\ref{thm-length-compare} also leads to a bound for this dimension. 
Let $P_h^{\xi,\ep}$ be the a.s.\ unique path in $\BB S^\ep$ connecting the left and right boundaries with minimal $D_h^{\xi,\ep}$-length.\footnote{The uniqueness of $P^{\xi,\ep}$ follows since there are only finitely many simple paths in $\BB S^\ep$ and a.s.\ no two such paths have the same $D_h^{\xi,\ep}$-length, which in turn is a consequence of the fact that a.s.\ $h_\ep(z)\not=h_\ep(w)$ for each distinct $z,w\in \BB S^\ep$.} We define the LFPP geodesic dimension
\eqb \label{eqn-geo-dim-def}
g(\xi) := \inf\left\{ \alpha  > 0 : \liminf_{\ep\rta 0} \BB P\left[ \# P_h^{\xi,\ep} \leq \ep^{-\alpha}   \right] = 1\right\} ,
\eqe
where here and in what follows we write $\# P$ for the number of vertices in a path $P$. 
Then $g(\xi)$ is a reasonable notion of the ``Euclidean dimension" of the path $P_h^{\xi,\ep}$ since $\# P$ is the number of Euclidean squares of side length $\ep$ needed to cover $P$. If (as expected) $D_h^{\xi,\ep}$ converges to a limiting metric as $\ep\rta 0$, then $g(\xi)$ should be the Euclidean Minkowski dimension of a typical geodesic for this limiting metric. 

In~\cite{ding-zhang-geodesic-dim} it is shown that $g(\xi) > 1$ whenever $\lambda(\xi) > 0$, which we know is the case for $\xi$ sufficiently small by~\cite{ding-goswami-watabiki} and for $\xi  > 0.266\dots$ by Theorem~\ref{thm-lambda-bound}. 
We also note that~\cite[Proposition 4.8]{mq-geodesics} shows that the Hausdorff dimension of geodesics for the continuum $\gamma$-LQG metric is strictly less than 2. 
Neither of these works prove a non-trivial explicit bound.
Here we prove the first non-trivial explicit bound for $g(\xi)$.  

\begin{cor}[Geodesic dimension upper bound] \label{cor-geodesic-dim-upper}
For each $\xi > 0$ and each $\zeta >0$, it holds with probability tending to 1 as $\ep\rta 0$ that each simple path $P$ in $\BB S^\ep$ with $D_h^{\xi,\ep}$-length $L_h^{\xi,\ep}(P) \leq \ep^{\lambda(\xi) - \zeta}$ satisfies
\eqb \label{eqn-geodesic-dim-upper}
\# P \leq \ep^{\lambda(\xi)   -  \xi \left( \sqrt{2 + 2 \lambda(\xi) + \xi^2}  -\xi  \right)  - 1  - \zeta } .
\eqe 
In particular, 
\eqb \label{eqn-g-upper}
g(\xi) \leq 1 - \lambda(\xi)   +   \xi \left( \sqrt{2 + 2 \lambda(\xi) + \xi^2}  -\xi  \right) .
\eqe
\end{cor}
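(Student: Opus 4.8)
The plan is to derive both displays \eqref{eqn-geodesic-dim-upper} and \eqref{eqn-g-upper} directly from the $\wt\xi = 0$ case of Theorem~\ref{thm-length-compare}. The one idea needed is that $L_h^{0,\ep}(P) = \sum_{j=0}^N \ep = \ep\,\#P$, so a bound on $L_h^{0,\ep}(P)$ is literally a bound on the number of vertices of $P$.

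First I would fix $\xi > 0$ and $\zeta > 0$ and apply Theorem~\ref{thm-length-compare} with $\wt\xi = 0$. On the high-probability event produced by that theorem, every simple path $P$ in $\BB S^\ep$ with $L_h^{\xi,\ep}(P) \leq \ep^{\lambda(\xi)-\zeta}$ satisfies $\ep\,\#P = L_h^{0,\ep}(P) \leq \ep^{\lambda(\xi) - \xi(\sqrt{2+2\lambda(\xi)+\xi^2}-\xi)-\zeta}$; dividing by $\ep$ gives \eqref{eqn-geodesic-dim-upper} verbatim.

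Next I would apply \eqref{eqn-geodesic-dim-upper} to the geodesic $P = P_h^{\xi,\ep}$, after two routine verifications. First, $P_h^{\xi,\ep}$ is simple: a minimizer of $D_h^{\xi,\ep}$-length can have no repeated vertex, since erasing a loop deletes at least one strictly positive term $\ep e^{\xi h_\ep(\cdot)}$ and hence strictly shortens the path, and by the a.s.\ uniqueness recalled in the footnote defining $P_h^{\xi,\ep}$ this minimizer is exactly the unique simple minimizer. Second, since $\lambda(\xi) - \zeta < \lambda(\xi)$ and $\{D_h^{\xi,\ep} \leq \ep^\alpha\}$ is increasing in $\alpha$ for $\ep < 1$, the definition \eqref{eqn-lambda-def} of $\lambda$ yields $\liminf_{\ep\rta 0}\BB P\big[L_h^{\xi,\ep}(P_h^{\xi,\ep}) = D_h^{\xi,\ep}(\bdy_{\op{L}}\BB S^\ep,\bdy_{\op{R}}\BB S^\ep) \leq \ep^{\lambda(\xi)-\zeta}\big] = 1$. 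Feeding this into \eqref{eqn-geodesic-dim-upper}, with probability tending to $1$ we obtain $\#P_h^{\xi,\ep} \leq \ep^{-(1-\lambda(\xi)+\xi(\sqrt{2+2\lambda(\xi)+\xi^2}-\xi)+\zeta)}$, where the exponent $1-\lambda(\xi)+\xi(\sqrt{2+2\lambda(\xi)+\xi^2}-\xi)+\zeta$ is strictly positive (because $\lambda(\xi)\leq 1$ and $\sqrt{2+2\lambda(\xi)+\xi^2}>\xi$, the latter since $\lambda(\xi)\geq -1/2$). Hence this exponent lies in the set defining $g(\xi)$ in \eqref{eqn-geo-dim-def}, so $g(\xi)\leq 1-\lambda(\xi)+\xi(\sqrt{2+2\lambda(\xi)+\xi^2}-\xi)+\zeta$; letting $\zeta\rta 0$ gives \eqref{eqn-g-upper}.

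I do not expect a genuine obstacle here, since all of the substantive work is already done in Theorem~\ref{thm-length-compare} — the corollary is essentially a restatement of its $\wt\xi=0$ case. The only points requiring any care are the bookkeeping items above: identifying $L_h^{0,\ep}(P)$ with $\ep\,\#P$, checking that $P_h^{\xi,\ep}$ is simple with length at most $\ep^{\lambda(\xi)-\zeta}$ with high probability, and unwinding the $\sup$/$\inf$ definitions \eqref{eqn-lambda-def} and \eqref{eqn-geo-dim-def} of $\lambda$ and $g$.
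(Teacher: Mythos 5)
Your proof is correct and follows exactly the paper's route: apply Theorem~\ref{thm-length-compare} with $\wt\xi = 0$ and use the identity $L_h^{0,\ep}(P) = \ep\,\#P$. The extra bookkeeping you supply (simplicity of $P_h^{\xi,\ep}$, the high-probability length bound from the definition of $\lambda$, and the passage to $g(\xi)$ via \eqref{eqn-geo-dim-def}) is exactly what the paper leaves implicit.
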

\begin{proof}
Apply Theorem~\ref{thm-length-compare} with $\wt\xi =  0$ and note that $L_h^{0,\ep}(P) = \ep \# P$. 
\end{proof}

The upper bound~\eqref{eqn-g-upper} is a decreasing function of $\lambda(\xi)$ whenever $\lambda(\xi) \geq -1$, and we know that $\lambda(\xi) \geq -1/2$ by Lemma~\ref{lem-lambda-basic}. 
Plugging our lower bound for $\lambda(\xi)$ from Theorem~\ref{thm-lambda-bound} into~\eqref{eqn-g-upper} gives an upper bound for $g(\xi)$ in terms of $\xi$ which is non-trivial ($ < 2$) for $\xi  < \sqrt{5/2}$. We plot this bound in Figure~\ref{fig-lambda-bound}, right.
Recall from the discussion just after~\eqref{eqn-geo-dim-def} that $g(\xi)$ is expected to be the Euclidean dimension of geodesics with respect to the continuum limit of the metrics $D_h^{\xi,\ep}$. By setting $\xi = \gamma/d_\gamma$ for $\gamma \in (0,2)$ in~\eqref{eqn-g-upper} and using~\eqref{eqn-lambda-gamma}, we get the following heuristic bound: 
\eqb  \label{eqn-geodesic-gamma}
\left( \text{Euclidean dimension of $\gamma$-LQG geodesics}\right)\leq \frac{\gamma}{d_\gamma}\left( \frac{2}{\gamma} + \frac{\gamma}{2} - \frac{\gamma}{d_\gamma}    + \sqrt{2 + 2 \frac{\gamma}{d_\gamma} + \frac{\gamma^2}{d_\gamma^2} }    \right)  .
\eqe 
For $\gamma =\sqrt{8/3}$, in which case $d_\gamma = \sqrt{8/3}$, the right side of~\eqref{eqn-geodesic-gamma} is 
$\frac{1}{6} (4 + \sqrt{15}) \approx 1.31216$.
In~\cite{gp-kpz}, we prove~\eqref{eqn-geodesic-gamma} for geodesics of the continuum $\gamma$-LQG metric from~\cite{gm-uniqueness}.

\section{Proof of Theorem~\ref{thm-length-compare}}
\label{sec-proof}

The only estimate needed for the proof of Theorem~\ref{thm-length-compare} is the following lemma. 

\begin{lem} \label{lem-bad-circle-count}
For $\alpha >0$, 
\eqbn
\BB E\left[ \#\left\{ z \in \BB S^\ep : h_\ep(z) <  \alpha\log\ep  \right\}  \right] = O_\ep\left(  \ep^{ -(2-\alpha^2/2)  } \right) .
\eqen
\end{lem}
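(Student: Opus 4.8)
The plan is to bound the expected count by a union bound over the $O(\ep^{-2})$ vertices $z \in \BB S^\ep$, together with a Gaussian tail estimate for the circle average $h_\ep(z)$. The key fact is that for fixed $z \in \BB S$ at Euclidean distance of order $1$ from $\bdy \BB D$, the random variable $h_\ep(z)$ is Gaussian with mean $O_\ep(1)$ and variance $\log \ep^{-1} + O_\ep(1)$; this is standard for the circle average process (see~\cite[Section 3.1]{shef-kpz}), and the $O_\ep(1)$ error is uniform over $z \in \BB S$ since $\BB S$ is a fixed compact set. Write $\sigma^2_\ep(z) := \op{Var} h_\ep(z) = \log\ep^{-1} + O_\ep(1)$.

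First I would record the Gaussian tail bound: if $N$ is Gaussian with mean $m = O_\ep(1)$ and variance $\sigma^2 = \log\ep^{-1} + O_\ep(1)$, then for $t > 0$,
\eqbn
\BB P[ N < -t ] \leq \exp\left( - \frac{(t - |m|)^2}{2\sigma^2} \right) .
\eqen
Applying this with $t = \alpha \log \ep^{-1}$ (note $\alpha \log \ep = -\alpha\log\ep^{-1} < 0$ for small $\ep$), the exponent becomes
\eqbn
-\frac{(\alpha \log\ep^{-1} - O_\ep(1))^2}{2(\log\ep^{-1} + O_\ep(1))} = -\frac{\alpha^2}{2}\log\ep^{-1} + O_\ep(\log\log\ep^{-1}) ,
\eqen
so that $\BB P[ h_\ep(z) < \alpha \log \ep ] \leq \ep^{\alpha^2/2 - o_\ep(1)}$, uniformly over $z \in \BB S^\ep$. (One can be slightly more careful and absorb the lower-order terms into the $O_\ep$ notation; since the claimed bound has an $O_\ep(\ep^{-(2-\alpha^2/2)})$ form with no explicit $o_\ep(1)$ in the exponent, I would check that the subpolynomial corrections are genuinely absorbed — in fact, expanding $(\alpha L - c)^2/(2(L+c')) = \frac{\alpha^2}{2} L - \alpha c + O(1/L)$ with $L = \log\ep^{-1}$ shows the correction is $O_\ep(1)$, not merely subpolynomial, so the bound holds with the stated $O_\ep$.)

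Then by linearity of expectation,
\eqbn
\BB E\left[ \#\{ z \in \BB S^\ep : h_\ep(z) < \alpha \log \ep \} \right] = \sum_{z \in \BB S^\ep} \BB P[ h_\ep(z) < \alpha\log\ep ] \leq \#\BB S^\ep \cdot \ep^{\alpha^2/2} \cdot O_\ep(1) = O_\ep\left( \ep^{-(2 - \alpha^2/2)} \right) ,
\eqen
since $\#\BB S^\ep \asymp \ep^{-2}$. The main (minor) obstacle is bookkeeping the $O_\ep(1)$ corrections to the mean and variance of $h_\ep(z)$ and verifying they do not degrade the polynomial exponent; this requires knowing that $\op{Var} h_\ep(z) = \log\ep^{-1} + O_\ep(1)$ with the error uniform over the compact set $\BB S$, which follows from the explicit formula for the circle-average covariance (or is exactly the uniformity hypothesis flagged in Footnote~\ref{footnote-lfpp-variants}). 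No concentration or chaining argument is needed since we only want a bound on the expectation, not an almost-sure statement.
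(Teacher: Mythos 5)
Your proposal is correct and follows essentially the same route as the paper: linearity of expectation over the $O_\ep(\ep^{-2})$ vertices of $\BB S^\ep$ together with the Gaussian tail bound applied to each circle average $h_\ep(z)$, which is Gaussian with variance $\log\ep^{-1}+O_\ep(1)$ uniformly over $\BB S$. The only cosmetic difference is that you allow an $O_\ep(1)$ mean, whereas the paper notes $h_\ep(z)$ is in fact centered (the whole-plane GFF normalized by its unit circle average is a centered Gaussian process); your extra care in checking that the lower-order corrections are genuinely $O_\ep(1)$ in the exponent, not merely $o_\ep(\log\ep^{-1})$, is a nice detail the paper leaves implicit.
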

\begin{proof}
The calculations in~\cite[Section 3.1]{shef-kpz} show that for each vertex $z \in \BB S^\ep$, the circle average $ h_\ep(z)$ is centered Gaussian with variance $\log \ep^{-1} + O_\ep(1)$, where the $O_\ep(1)$ is uniform over all $z\in\BB S^\ep$.\footnote{The calculations in~\cite[Section 3.1]{shef-kpz} are carried out for the zero-boundary GFF on a proper subdomain of $\BB C$, but similar calculations work in the whole-plane case. Alternatively, the whole-plane case can be extracted from the zero-boundary case using either the Markov property of the whole-plane GFF~\cite[Proposition 2.8]{ig4}; or the fact that $h|_{(-1,2)^2}$ can be expressed as the limit (in the total variation sense) of $(\rng h^n - \rng h^n_1(0))|_{(-1,2)^2}$ as $n\rta\infty$, where for $n\in\BB N$, $\rng h^n$ is the zero-boundary GFF on the ball $B_n(0)$~\cite[Proposition 2.10]{ig4}.}
The lemma now follows by applying the Gaussian tail bound to each of these random variables, then summing over the $O_\ep (\ep^{-2})$ vertices of $\BB S^\ep$. 
\end{proof}

\begin{proof}[Proof of Theorem~\ref{thm-length-compare}]
Fix $\alpha > 0$ to be chosen later, in a manner depending only on $\xi$ and $\wt\xi$. 
Our strategy for bounding the $\wt{\xi}$-LFPP length of a path $P$ in terms of its $\xi$-LFPP length is to partition the set of points $z \in P$ according to whether $h_{\ep}(z) < \alpha \log \ep$ or $h_\ep(z) \geq \alpha\log\ep$.   The idea is that the contribution of points $z$ with $h_{\ep}(z) \geq \alpha \log \ep$ to the $\wt{\xi}$-LFPP length can be bounded in terms of their contribution to the $\xi$-LFPP length; on the other hand, we can crudely bound the set of $z \in P$ with $h_{\ep}(z) < \alpha \log \ep$ in terms of the total number of such points in $\BB S^{\ep}$.

Let us now proceed with the details. 
By Lemma~\ref{lem-bad-circle-count}, it holds with probability tending to 1 as $\ep\rta 0$ that
\eqb \label{eqn-use-bad-circle-count}  
\#\left\{ z \in \BB S^\ep : h_\ep(z) <  \alpha\log\ep \right\} \leq  \ep^{ -(2-\alpha^2/2)  - \zeta } .
\eqe 
Henceforth assume that~\eqref{eqn-use-bad-circle-count} holds. We will show that~\eqref{eqn-length-compare} holds.  

Let $P : \{0,1,\ldots,N\} \rta \BB S^\ep$ be a simple path in $\BB S^\ep$ with $L_h^{\xi,\ep}(P)$-length at most $\ep^{\lambda(\xi) -\zeta} $.  
We partition the points in the sum defining $L_h^{\wt\xi,\ep}(P)$ according to whether $h_\ep(P(j)) < \alpha\log\ep$ or $h_\ep(P(j)) \geq \alpha\log\ep$:
\allb \label{eqn-path-split}
L_h^{\wt\xi,\ep}(P) 
= \sum_{j=0}^N \ep e^{\wt\xi h_\ep(P(j) )} 
&= \sum_{j : h_\ep( P(j) )  <   \alpha\log\ep } \ep e^{\wt\xi h_\ep(P(j) )} 
 + \sum_{j : h_\ep( P(j) )  \geq \alpha\log\ep } \ep e^{\wt\xi h_\ep(P(j) )} \notag \\
&\leq  \ep^{1 + \alpha \wt\xi} \#\left\{ j : h_\ep(P(j) )   <   \alpha\log\ep   \right\} 
 +   \sum_{j : h_\ep(P(j) )  \geq \alpha\log\ep } \ep e^{\wt \xi h_\ep(P(j) )}  .
\alle

Since $P$ is a simple path, the bound~\eqref{eqn-use-bad-circle-count} shows that the first term on the right in~\eqref{eqn-path-split} is at most $\ep^{  \wt\xi \alpha   + \alpha^2/2 - 1   - \zeta }$.  
As for the second term, since $\wt\xi\leq \xi$, if $h_\ep(P(j) ) \geq \alpha\log\ep$, then $e^{\wt \xi h_\ep(P(j) )} \leq \ep^{- (\xi-\wt\xi)\alpha } e^{ \xi h_\ep(P(j) )}$. 
Plugging these two estimates into~\eqref{eqn-path-split} shows that
\eqb \label{eqn-split-exponents}
L_h^{\wt\xi,\ep}(P)  
\leq \ep^{  \wt\xi  \alpha  + \alpha^2/2 - 1  + o_\ep(1)} +  \ep^{-(\xi-\wt\xi) \alpha } L_h^{\xi,\ep}(P)  
\leq \ep^{ \wt\xi \alpha + \alpha^2/2 - 1  - \zeta} + \ep^{\lambda(\xi) - (\xi-\wt\xi) \alpha  -\zeta} .
\eqe  
We now choose $\alpha >0$ so that the two powers on $\ep$ on the right in~\eqref{eqn-split-exponents} are equal, i.e., 
\eqb 
\alpha =   \sqrt{2 + 2 \lambda(\xi) + \xi^2}  -\xi .
\eqe 
Plugging this into~\eqref{eqn-split-exponents} gives~\eqref{eqn-length-compare}.
\end{proof}

\section{Relating $\xi$ to the central charge} 
\label{sec-cc}

The exponent $\lambda(\xi)$ of~\eqref{eqn-lambda-def} gives rise to a notion of ``central charge" for LFPP with exponent $\xi$, as we will now explain.
Following~\cite{shef-kpz}, one can define a \emph{$\gamma$-Liouville quantum gravity (LQG)} surface for $\gamma \in (0,2]$ to be an equivalence class of pairs $(U,h)$ where $U\subset\BB C$ is a simply connected domain and $h$ is a random distribution on $U$ (always taken to be a realization of some variant of the GFF on $U$), with two pairs $(U,h )$ and $(\wt U  ,\wt h)$ considered to be equivalent if there is a conformal map $\phi : \wt U \rta U$ such that
\eqb \label{eqn-lqg-coord}
\wt h = h\circ \phi  + Q\log |\phi'| \quad \text{for} \quad Q = \frac{2}{\gamma} +\frac{\gamma}{2} \geq 2 .
\eqe
We think of equivalent pairs as different parametrizations of the same surface. 
Objects associated with LQG are invariant under coordinate changes of the form~\eqref{eqn-lqg-coord}. 
This is proven for the measure in~\cite[Proposition 2.1]{shef-kpz} and the metric in~\cite[Theorem 1.1]{gm-coord-change}.

The parameter $Q$ in~\eqref{eqn-lqg-coord} is called the \emph{background charge}.
It is related to the so-called \emph{central charge} by $\cc = 25 - 6Q^2$.
We have $\cc \in (-\infty,1]$ for $\gamma \in (0,2]$.  
In the physics literature, the parameter $\cc$, rather than the parameter $\gamma$, is often viewed as the more natural one. 
The above definition of an LQG surface makes sense for any value of $Q  >0$ (not just $Q \geq 2$) and hence for any central charge $\cc \in (-\infty, 25)$. 
See \cite{ghpr-central-charge} for further discussion of LQG with $\cc  \in (1,25)$.

It is not hard to see (see~\cite[Proposition 2.3]{dg-lqg-dim}) that if $D_h^{\xi,\ep}$ has a scaling limit, then at least for complex affine maps $\phi$ the limiting metric must be invariant under coordinate changes of the form~\eqref{eqn-lqg-coord} for $Q = (1-\lambda(\xi))/\xi$. 
This leads us to define the background charge and central charge, respectively, for LFPP with parameter $\xi$ by
\eqb \label{eqn-Q-c}
Q(\xi) := (1-\lambda(\xi)) / \xi \quad \text{and} \quad \cc(\xi) := 25-6Q(\xi)^2 .
\eqe 
It is shown in~\cite[Theorem 1.5]{dg-lqg-dim} that for $\gamma\in (0,2)$, one has $Q(\gamma/d_\gamma) = 2/\gamma+\gamma/2$, as expected. 
Since $\lambda(\xi) \leq 1$, one always has $Q(\xi) \geq 0$. 

For $\xi = 1/\sqrt 3$, the extended Watabiki prediction~\eqref{eqn-watabiki-extend} gives $\lambda(\xi) = 1/3$ and hence $Q(\xi) = \sqrt{4/3}$ and $\cc(\xi) = 17$.
Similarly, under~\eqref{eqn-watabiki-extend}, $\xi = \sqrt{5/2} - \sqrt{2/3}$ corresponds to $\cc(\xi) = 21.741\dots$. 
Combined with Corollary~\ref{cor-watabiki-contradict} and Lemma~\ref{lem-Q-mono} just below, this means that the extended Watabiki prediction~\eqref{eqn-watabiki-extend} is false for a dense subset of central charge values in $(17,25)$ (resp.\ for all $\cc  \in ( 21.741\dots , 25)$). 

\begin{lem} \label{lem-Q-mono}
The background charge $Q(\xi)$ is strictly decreasing on $\left(0 , 0.7  \right)$, non-increasing on $[0.7,\infty)$, and satisfies $\lim_{\xi\rta\infty} Q(\xi) = 0$. 
\end{lem}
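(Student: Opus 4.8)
The plan is to treat the three assertions separately, the first two being short and the third being where the real work lies. The limit is immediate: by Lemma~\ref{lem-lambda-basic} we have $\lambda(\xi)\in[-1/2,1]$, so $0\le 1-\lambda(\xi)\le 3/2$ and hence $0\le Q(\xi)=(1-\lambda(\xi))/\xi\le 3/(2\xi)\to 0$. For the monotonicity statements, observe first that since $\lambda$ is $2$-Lipschitz on $[0,\infty)$ and $\xi\mapsto 1/\xi$ is smooth on $(0,\infty)$, the function $Q$ is locally Lipschitz on $(0,\infty)$, so on any subinterval it is non-increasing (resp.\ strictly decreasing) iff $Q'\le0$ (resp.\ $Q'<0$) a.e.\ there. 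Wherever $\lambda$ is differentiable one has $Q'(\xi)=\xi^{-2}\big(\lambda(\xi)-1-\xi\lambda'(\xi)\big)$, so $Q'(\xi)\le0\iff\lambda'(\xi)\ge-Q(\xi)$, and likewise with strict inequalities. By the monotonicity relation~\eqref{eqn-dg-mono}, $\lambda'(\xi)\ge-\xi$ a.e.; hence whenever $Q(\xi)\ge\xi$ — equivalently $\lambda(\xi)\le1-\xi^2$ — we get $\lambda'(\xi)\ge-\xi\ge-Q(\xi)$ and therefore $Q'(\xi)\le0$, with $Q'(\xi)<0$ whenever $Q(\xi)>\xi$, i.e.\ $\lambda(\xi)<1-\xi^2$. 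So the whole question reduces to the sign of $1-\xi^2-\lambda(\xi)$.

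To get strict monotonicity on $(0,0.7)$ I would plug in the upper bound $\lambda(\xi)\le\ol\lambda(\xi)$ from Theorem~\ref{thm-lambda-bound}. For $\xi\le1/\sqrt6$ one has $\ol\lambda(\xi)\le\sqrt2\,\xi$, and $\sqrt2\,\xi<1-\xi^2$ for all $\xi<(\sqrt6-\sqrt2)/2\approx0.518$, so in particular on $(0,1/\sqrt6]$. For $\xi\ge1/\sqrt6$ one has $\ol\lambda(\xi)\le(\sqrt{5/2}-1/\sqrt6)\xi-(\sqrt{15}-2)/6$, and an elementary computation shows this is $<1-\xi^2$ precisely for $\xi$ below the positive root $\xi_\circ$ of $\xi^2+(\sqrt{5/2}-1/\sqrt6)\xi-(4+\sqrt{15})/6=0$, which is $\xi_\circ\approx0.7004>0.7$. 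Hence $\lambda(\xi)<1-\xi^2$ on all of $(0,0.7)$, so $Q'(\xi)<0$ a.e.\ there and $Q$ is strictly decreasing on $(0,0.7)$.

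The remaining assertion — $Q$ non-increasing on $[0.7,\infty)$ — is the main obstacle, since for $\xi>\xi_\circ$ the bounds of Theorem~\ref{thm-lambda-bound} no longer pin down the sign of $1-\xi^2-\lambda(\xi)$, so the pointwise argument fails. Here I would pass to the integrated form of the estimates: writing $m(\xi):=\xi Q(\xi)=1-\lambda(\xi)\in[0,3/2]$, the relation $\lambda'\ge-\xi$ is equivalent to $m'\le\xi$, which yields, for $0.7\le\wt\xi<\xi$,
\[
\xi\,Q(\xi)=m(\xi)\le\min\Big\{\wt\xi\,Q(\wt\xi)+\tfrac{\xi^2-\wt\xi^2}{2},\ \tfrac32,\ 1-\ul\lambda(\xi)\Big\}.
\]
The first term is the effective one when $\xi$ is not much larger than $\wt\xi$ and gives $Q(\xi)\le Q(\wt\xi)$ once $Q(\wt\xi)\ge(\xi+\wt\xi)/2$, while the last two terms are effective for larger $\xi$ and give $Q(\xi)\le Q(\wt\xi)$ provided $\xi$ is large enough relative to $Q(\wt\xi)$; one then combines these with the lower bound $Q(\wt\xi)\ge(1-\ol\lambda(\wt\xi))/\wt\xi$ from Theorem~\ref{thm-lambda-bound}, splitting according to the size of $\xi-\wt\xi$. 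I expect the genuinely delicate point to be the regime where $\ol\lambda(\wt\xi)$ is close to — or equal to — $1$, so that the lower bound on $Q(\wt\xi)$ degenerates and the case split becomes tight; in particular, the case in which $\lambda$ attains the value $1$ appears to need the expected degeneracy of LFPP for $\xi$ past the first such point (equivalently, one needs that $\lambda$ cannot reach $1$ and then decrease).
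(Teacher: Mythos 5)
Your treatment of the limit at infinity and of strict decrease on $(0,0.7)$ is correct and mirrors the paper: you get $Q(\xi)\le 3/(2\xi)$ directly from $\lambda\in[-1/2,1]$, and you plug $\ol\lambda$ into the pointwise condition $\lambda(\xi)<1-\xi^2$ to find the threshold $\xi_\circ\approx0.7004$, which is exactly what the paper does. (You are in fact slightly more careful than the printed proof, which has a sign slip in the displayed inequality for $Q'$; the condition you derive, $\lambda(\xi)<1-\xi^2$, is the right one and is what the paper actually tests against $\ol\lambda$.)

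However, the third and main assertion --- $Q$ non-increasing on all of $[0.7,\infty)$ --- is not established by your argument, and you correctly diagnose the obstruction yourself: the integrated form of $\lambda'\ge-\xi$ gives $\xi Q(\xi)\le\wt\xi Q(\wt\xi)+(\xi^2-\wt\xi^2)/2$, which yields $Q(\xi)\le Q(\wt\xi)$ only when $Q(\wt\xi)\ge(\xi+\wt\xi)/2$, and the crude bound $\xi Q(\xi)\le3/2$ only takes over for $\xi$ very large relative to $Q(\wt\xi)$. In the regime where $Q(\wt\xi)$ is small these two windows do not overlap, and nothing in your toolbox excludes the scenario in which $\lambda$ reaches $1$ at some finite $\xi$ and then decreases. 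This is a genuine gap, not merely a detail to be tidied.

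The idea you are missing is a direct pathwise comparison that makes no reference to differential inequalities or to the bounds of Theorem~\ref{thm-lambda-bound}. Take $\wt P$ to be the $D_h^{\wt\xi,\ep}$-geodesic across $\BB S^\ep$, and apply the concavity/subadditivity of $x\mapsto x^{\wt\xi/\xi}$ (for $0\le\wt\xi\le\xi$) to the sum defining $L_h^{\xi,\ep}(\wt P)$:
\[
\left[ L_h^{\xi,\ep}(\wt P)\right]^{\wt\xi/\xi}
=\ep^{\wt\xi/\xi}\left(\sum_j e^{\xi h_\ep(\wt P(j))}\right)^{\wt\xi/\xi}
\le \ep^{\wt\xi/\xi-1}\,L_h^{\wt\xi,\ep}(\wt P).
\]
Since $D_h^{\xi,\ep}(\bdy_{\op L}\BB S^\ep,\bdy_{\op R}\BB S^\ep)\le L_h^{\xi,\ep}(\wt P)$ and $L_h^{\wt\xi,\ep}(\wt P)\le\ep^{\lambda(\wt\xi)+o_\ep(1)}$, taking $\ep\to0$ in the definition of $\lambda$ gives $\lambda(\xi)\,\wt\xi/\xi\ge\wt\xi/\xi+\lambda(\wt\xi)-1$, which rearranges to $(1-\lambda(\xi))/\xi\le(1-\lambda(\wt\xi))/\wt\xi$, i.e.\ $Q(\xi)\le Q(\wt\xi)$, with no case split and no dependence on how close $\lambda$ is to $1$. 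This single estimate is what closes the gap, and it is in fact stronger than what your differential approach could hope to prove even if completed, since it handles the degenerate case $Q(\wt\xi)=0$ for free.
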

\begin{proof}
Since $\lambda(\xi) \in [-1/2,1]$ (Lemma~\ref{lem-lambda-basic}), it is obvious that $\liminf_{\xi\rta\infty} Q(\xi) = 0$. 

Since $Q(\xi)$ is a Lipschitz continuous function of $\xi$ (Lemma~\ref{lem-lambda-basic}), it is absolutely continuous and so is differentiable Lebesgue-a.e.
Hence to show that $Q(\xi)$ is strictly decreasing on $(0,0.7)$ it suffices to show that its derivative is strictly negative there. 
By~\eqref{eqn-dg-mono}, $\lambda'(\xi) \geq -\xi$ and hence
\eqbn
Q'(\xi)  = \frac{1}{\xi^2} \left( -\lambda'(\xi) \xi - 1 + \lambda(\xi) \right) \leq \frac{1}{\xi^2} \left( - \xi^2 - 1 + \lambda(\xi) \right) .
\eqen
Plugging in our upper bound for $\lambda(\xi)$ from Theorem~\ref{thm-lambda-bound} shows that this is negative for $\xi < \sqrt{2 - \frac{1}{6} \sqrt{113 - 8 \sqrt{15}}} \approx 0.70044$. 

Finally, we show $Q(\xi)$ is always non-increasing. Let $ 0 \leq \wt\xi \leq \xi$.
For $\ep > 0$, let $\wt P =  P_h^{\wt\xi,\ep}$ be the minimal-$D_h^{\wt\xi,\ep}$-length path between the left and right boundaries of $\BB S^\ep$. 
Since $x\mapsto x^{\wt\xi/\xi}$ is subadditive,
\eqbn
\left[ L_h^{\xi , \ep}(\wt P) \right]^{\wt\xi/\xi}  =   \ep^{\wt\xi/\xi} \left( \sum_{j=0}^N   e^{ \xi h_\ep(\wt P(j) )} \right)^{\wt\xi/\xi} \leq  \ep^{\wt\xi/\xi - 1} L_h^{\wt\xi , \ep}(\wt P)  .
\eqen
By the definition~\eqref{eqn-lambda-def} of $\lambda$, it holds with probability tending to 1 as $\ep\rta 0$ that 
\eqbn
\left[ D_h^{\xi,\ep}\left(\bdy_{\op{L}} \BB S , \bdy_{\op{R}} \BB S \right) \right]^{\wt\xi/\xi} \leq \left[ L_h^{\xi,\ep}(\wt P) \right]^{\wt\xi/\xi} \leq   \ep^{\wt\xi/\xi  +  \lambda(\wt\xi)   - 1  +o_\ep(1)}  .
\eqen
Consequently, $\lambda(\xi) \wt\xi/\xi \geq \wt\xi / \xi + \lambda(\wt\xi)   - 1$. Re-arranging gives 
$(1 -  \lambda(\xi))/\xi \leq (1-\lambda(\wt\xi))/\wt \xi  $.
\end{proof}

\begin{remark} \label{remark-square-subdivision}
The paper \cite{ghpr-central-charge} introduces another natural discretization of LQG which works for all $\cc  < 25$ (equivalently, $Q > 0$), based on a dyadic tiling $\mcl S_h^{Q,\ep}$ of the plane consisting of squares which all have ``LQG size $\ep$" with respect to $h$. We expect that this model is related to LFPP as follows: if $ \xi(Q)  > 0$ is chosen so that the graph distance in $\mcl S_h^{Q,\ep}$ between $\bdy_{\op{L}} \BB S$ and $\bdy_{\op{R}}\BB S$ grows like $\ep^{-\xi(Q)}$ as $\ep\rta 0$, then $\lambda(\xi(Q)) = 1 - \xi(Q) Q$; i.e., $Q(\xi(Q)) = Q$. 
This relation for $Q > 2$ and $\xi \in (0,2/d_2)$ follows from~\cite[Theorem 1.5]{dg-lqg-dim}. We expect that the proof of that theorem could be adapted to treat the case when $Q   < 2$ and $\xi  >2/d_2$ as well. 
\end{remark}

\begin{remark} \label{remark-Q-guess}
We expect that the function $\xi\mapsto Q(\xi)$ is injective, at least up until some ``critical point" $\xi_*$ at which it becomes constant. 
By Remark~\ref{remark-square-subdivision}, each value of $Q\in (0,2)$ should correspond to some value of $\xi > 0$, so if such a critical point $\xi_*$ exists then $\xi_*$ should be the smallest $\xi   > 0$ for which $\lambda(\xi) = 1$, equivalently $Q(\xi) = 0$. 
However, we do not know whether there exists $\xi > 0$ for which $Q(\xi) = 0$, so it could be the case that $\xi\mapsto Q(\xi)$ is strictly decreasing on all of $(0,\infty)$.  
Note that both the extended Watabiki prediction~\eqref{eqn-watabiki} and the extended alternative guess~\eqref{eqn-dg-guess} would say that $Q(\xi) = 0$ for some finite $\xi$, but we know that the former is not correct (Corollary~\ref{cor-watabiki-contradict}) and, as noted above, we have no theoretical justification for the latter.
\end{remark}

\bibliography{cibiblong,cibib}
\bibliographystyle{hmralphaabbrv}

\end{document}